\definecolor{olive}{rgb}{0.3, 0.4, .1}
\def\Th1{\varTheta}
\DeclareMathOperator\supp{supp}
\begin{document}

\newtheorem{theorem}{Theorem}
\newtheorem{lemma}[theorem]{Lemma}
\newtheorem{claim}[theorem]{Claim}
\newtheorem{cor}[theorem]{Corollary}
\newtheorem{conj}[theorem]{Conjecture}
\newtheorem{prop}[theorem]{Proposition}
\newtheorem{definition}[theorem]{Definition}
\newtheorem{question}[theorem]{Question}
\newtheorem{example}[theorem]{Example}
\newcommand{\hh}{{{\mathrm h}}}
\newtheorem{remark}[theorem]{Remark}

\numberwithin{equation}{section}
\numberwithin{theorem}{section}
\numberwithin{table}{section}
\numberwithin{figure}{section}

\def\sssum{\mathop{\sum\!\sum\!\sum}}
\def\ssum{\mathop{\sum\ldots \sum}}
\def\iint{\mathop{\int\ldots \int}}

\newcommand{\diam}{\operatorname{diam}}

\def\squareforqed{\hbox{\rlap{$\sqcap$}$\sqcup$}}
\def\qed{\ifmmode\squareforqed\else{\unskip\nobreak\hfil
    \penalty50\hskip1em \nobreak\hfil\squareforqed
    \parfillskip=0pt\finalhyphendemerits=0\endgraf}\fi}

\newfont{\teneufm}{eufm10}
\newfont{\seveneufm}{eufm7}
\newfont{\fiveeufm}{eufm5}
%
%
\newfam\eufmfam
\textfont\eufmfam=\teneufm
\scriptfont\eufmfam=\seveneufm
\scriptscriptfont\eufmfam=\fiveeufm
%
%
\def\frak#1{{\fam\eufmfam\relax#1}}

\newcommand{\bflambda}{{\boldsymbol{\lambda}}}
\newcommand{\bfmu}{{\boldsymbol{\mu}}}
\newcommand{\bfxi}{{\boldsymbol{\eta}}}
\newcommand{\bfrho}{{\boldsymbol{\rho}}}

\def\eps{\varepsilon}

\def\fK{\mathfrak K}
\def\fT{\mathfrak{T}}
\def\fL{\mathfrak L}
\def\fR{\mathfrak R}

\def\fA{{\mathfrak A}}
\def\fB{{\mathfrak B}}
\def\fC{{\mathfrak C}}
\def\fM{{\mathfrak M}}
\def\fS{{\mathfrak  S}}
\def\fU{{\mathfrak U}}
\def\fW{{\mathfrak W}}

\def\T {\mathsf {T}}
\def\Tor{\mathsf{T}_d}
\def\Tore{\widetilde{\mathrm{T}}_{d} }

\def\sM {\mathsf {M}}

\def\ss{\mathsf {s}}

\def\Kmnd{\cK_d(m,n)}
\def\Kmnp{\cK_p(m,n)}
\def\Kmnq{\cK_q(m,n)}

\def \balpha{\bm{\alpha}}
\def \bbeta{\bm{\beta}}
\def \bgamma{\bm{\gamma}}
\def \bdelta{\bm{\delta}}
\def \bzeta{\bm{\zeta}}
\def \blambda{\bm{\lambda}}
\def \bchi{\bm{\chi}}
\def \bphi{\bm{\varphi}}
\def \bpsi{\bm{\psi}}
\def \bnu{\bm{\nu}}
\def \bomega{\bm{\omega}}

\def \bell{\bm{\ell}}

\def\eqref#1{(\ref{#1})}

\def\vec#1{\mathbf{#1}}

\newcommand{\abs}[1]{\left| #1 \right|}

\def\Zq{\mathbb{Z}_q}
\def\Zqx{\mathbb{Z}_q^*}
\def\Zd{\mathbb{Z}_d}
\def\Zdx{\mathbb{Z}_d^*}
\def\Zf{\mathbb{Z}_f}
\def\Zfx{\mathbb{Z}_f^*}
\def\Zp{\mathbb{Z}_p}
\def\Zpx{\mathbb{Z}_p^*}
\def\cM{\mathcal M}
\def\cE{\mathcal E}
\def\cH{\mathcal H}

\def\le{\leqslant}

\def\ge{\geqslant}

\def\sfB{\mathsf {B}}
\def\sfC{\mathsf {C}}
\def\sfS{\mathsf {S}}
\def\L{\mathsf {L}}
\def\FF{\mathsf {F}}

\def\sE {\mathscr{E}}
\def\sS {\mathscr{S}}
\def\sL {\mathscr{L}}

\def\cA{{\mathcal A}}
\def\cB{{\mathcal B}}
\def\cC{{\mathcal C}}
\def\cD{{\mathcal D}}
\def\cE{{\mathcal E}}
\def\cF{{\mathcal F}}
\def\cG{{\mathcal G}}
\def\cH{{\mathcal H}}
\def\cI{{\mathcal I}}
\def\cJ{{\mathcal J}}
\def\cK{{\mathcal K}}
\def\cL{{\mathcal L}}
\def\cM{{\mathcal M}}
\def\cN{{\mathcal N}}
\def\cO{{\mathcal O}}
\def\cP{{\mathcal P}}
\def\cQ{{\mathcal Q}}
\def\cR{{\mathcal R}}
\def\cS{{\mathcal S}}
\def\cT{{\mathcal T}}
\def\cU{{\mathcal U}}
\def\cV{{\mathcal V}}
\def\cW{{\mathcal W}}
\def\cX{{\mathcal X}}
\def\cY{{\mathcal Y}}
\def\cZ{{\mathcal Z}}
\newcommand{\rmod}[1]{\: \mbox{mod} \: #1}

\def\cg{{\mathcal g}}

\def\1{\mathbf 1}
\def\vy{\mathbf y}
\def\vr{\mathbf r}
\def\vx{\mathbf x}
\def\va{\mathbf a}
\def\vb{\mathbf b}
\def\vc{\mathbf c}
\def\ve{\mathbf e}
\def\vh{\mathbf h}
\def\vk{\mathbf k}
\def\vm{\mathbf m}
\def\vz{\mathbf z}
\def\vu{\mathbf u}
\def\vv{\mathbf v}

\def\vM{\mathbf M}
\def\vN{\mathbf N}

\def\e{{\mathbf{\,e}}}
\def\ep{{\mathbf{\,e}}_p}
\def\eq{{\mathbf{\,e}}_q}

\def\Tr{{\mathrm{Tr}}}
\def\Nm{{\mathrm{Nm}}}

\def\SS{{\mathbf{S}}}

\def\lcm{{\mathrm{lcm}}}

\def\0{{\mathbf{0}}}

\def\({\left(}
  \def\){\right)}
\def\fl#1{\left\lfloor#1\right\rfloor}
\def\rf#1{\left\lceil#1\right\rceil}
\def\fl#1{\left\lfloor#1\right\rfloor}
\def\ni#1{\left\lfloor#1\right\rceil}
\def\sumstar#1{\mathop{\sum\vphantom|^{\!\!*}\,}_{#1}}

\def\mand{\qquad \mbox{and} \qquad}

\def\tblue#1{\begin{color}{blue}{{#1}}\end{color}}




\hyphenation{re-pub-lished}

\mathsurround=1pt

\def\bfdefault{b}

\def \F{{\mathbb F}}
\def \K{{\mathbb K}}
\def \N{{\mathbb N}}
\def \Z{{\mathbb Z}}
\def \P{{\mathbb P}}
\def \Q{{\mathbb Q}}
\def \R{{\mathbb R}}
\def \C{{\mathbb C}}
\def\Fp{\F_p}
\def \fp{\Fp^*}

\def \xbar{\overline x}

\title[Exponential Sums over Smooth Numbers]{Exponential sums  over integers without large prime divisors}

\author[S. Drappeau]{Sary Drappeau}
\address{
  Institut de Math{\'e}matiques de Marseille
  Case 907, Campus de Luminy, 
  13288 Marseille Cedex 9, France}
\email{sary-aurelien.drappeau@univ-amu.fr}

\author[I.~E.~Shparlinski]{Igor E. Shparlinski} \address{School of Mathematics and Statistics, University of New South Wales.
  Sydney, NSW 2052, Australia}
\email{igor.shparlinski@unsw.edu.au}

\begin{abstract}
  We obtain a new bound on exponential sums over integers without large prime divisors,  
  improving that of Fouvry and Tenenbaum (1991). 
  For a fixed integer $\nu\ne 0$, we also obtain new bounds on exponential sums with $\nu$-th
  powers of such integers.
  The improvement is based on exploiting more precisely the factorisation of integers
  without large prime divisors, along with existing Type~I and Type~II bounds. For~$\nu=1$
  we use the classical bounds of Vinogradov (1937), while for~$\nu\neq 1$ we use bounds of
  Vaughan (1975) as well as of Fouvry, Kowalski and Michel (2014).
\end{abstract}

\keywords{Exponential sums, smooth numbers, trace functions, multilinear sums}
\subjclass[2010]{11L07, 11N25}

\maketitle

\setcounter{tocdepth}{1}
\tableofcontents

\section{Introduction}

Let $P(n)$ denote the largest prime divisor of an integer $n\ge 1$,
with the convention that $P(1) = 1$. 

We recall that an integer $n$ is called {\it $y$-smooth\/} or {\it $y$-friable\/} if  $P(n) \le y$,
see~\cite{Granv, HilTen} for a background.  

For $x \ge y \geq 2$, we consider the set 
\[
  \cS(x,y) = \{n \in [1, x]\cap \Z:~ P(n) \le y\}
\]
the set of $y$-smooth positive integers  $n \le x$
and as usual we  use $\Psi(x,y) = \# \cS(x,y)$ to denote its cardinality.

For  real numbers $x \ge y \ge 2$ and $\vartheta$ we define the
exponential sum
\[
  T_{\vartheta}(x,y)  =  \sum_{n \in \cS(x,y)} \eq\(\vartheta n\), 
\]
where  $\e(z) = \exp(2\pi i z)$.  
Sums of these kind, and their generalisations with  non-linear functions of $n$,
have a long history of studying, in particular in relation to Waring's problem and the circle method, see for example~\cite{dlB1, dlB2, dlBGr1, dlBTen, Drap,  FoTe, Harp, MaWa} and the references therein. 

The bound of Fouvry and Tenenbaum~\cite[Theorem~13]{FoTe} (after replacing some logarithmic factors with $x^{o(1)}$) asserts that for $y \le x^{1/2}$,  uniformly over  integers $a$ and $q$
with $\gcd(a,q)=1$, we have 
\begin{equation}
  \label{eq: FT Real}
  \left|T_{\vartheta}(x,y)\right|  \le x^{1+o(1)} \(x^{-1/4} y^{1/2} +  q^{-1/2}+ \(x/qy\)^{-1/2}\) \sL , 
\end{equation}
where
\begin{equation}
  \label{eq: L aq}
  \sL = 1 + x\abs{\vartheta - \frac{a}{q}}.
\end{equation}

In turn, the bound~\eqref{eq: FT Real} follows from the bound
\begin{equation}
  \label{eq: FT Rat}
  \left|S_{a,q}(x,y) \right|   \le x^{1+o(1)} \(x^{-1/4} y^{1/2} +  q^{-1/2}+  \(x/qy\)^{-1/2}\) 
\end{equation}
on rational sums 
\[
  S_{a,q}(x,y)  = T_{a/q}(x,y)=  \sum_{n \in \cS(x,y)} \eq\(an\), 
\]
where  $\eq(z) = \exp(2\pi i z/q)$.

It is easy to see  that the bound~\eqref{eq: FT Rat} is trivial unless $x \ge q^{1+\varepsilon} y$
and $y \le x^{1/2-\varepsilon}$ for some fixed $\varepsilon>0$.
Here, we exploit more precisely the bilinear structure of the exponential sums, and obtain a bound which is 
nontrivial starting from  $x \ge q^{1+\eps}$ which is  clearly an optimal range (apart from the presence of $\eps>0$).

In order to simplify the exposition, we concentrate on the regime when
$y$ grows as some power $x$, say $y = x^{\eta +o(1)}$, for some fixed $\eta > 0$, and in particular we have 
$\Psi(x,y) \ge c(\eta) x$, for some constant $c(\eta)>0$, depending only on $\eta$, see~\cite{Granv,HilTen}.
Hence, in this range,  
the trivial bound $T_{\vartheta}(x,y)$, which we try to improve,  is essentially 
$|T_{\vartheta}(x,y)|  \le x$. 
A more careful examination of our argument, with full book-keeping of all logarithmic factors and  invoking better bounds on the divisor function ``on average'', is most likely able to  lead to new 
bounds also in the range when $y = x^{o(1)}$.

\begin{theorem}\label{thm: Sum S}
  Let~$\eps>0$. For all $x \ge y\ge 2$, and all integers $a$ with $\gcd(a,q)=1$, we have
  \[
    \left|S_{a,q}(x,y) \right| 
    \le x^{1+o(1)} \( \min\{x^{-1/5}, (x/y)^{-1/4}\} + q^{-1/2} + (x/q)^{-1/2}\). 
  \]
\end{theorem}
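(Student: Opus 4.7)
My plan is to reduce $S_{a,q}(x,y)$ to bilinear exponential sums of the form $B(\alpha, \beta) = \sum_d \alpha_d \sum_m \beta_m\, \eq(adm)$ with $|\alpha_d|, |\beta_m| \le x^{o(1)}$ supported on $y$-smooth integers, and apply Vinogradov's~(1937) bilinear estimates. The improvement over~\eqref{eq: FT Rat} is to come from a sharper exploitation of the factorisation of smooth integers, governed by a pivot parameter $D \in [1, x]$ to be optimised at the end. The key elementary fact is that every $y$-smooth integer $n > D$ has at least one divisor in $(D/y, D]$: the largest divisor of $n$ not exceeding $D$ cannot be smaller than $D/y$, for otherwise multiplying it by any prime factor of the complement (necessarily $\le y$) would give a larger divisor still at most $D$. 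Selecting a canonical such divisor $d = d(n)$ and writing $n = d\cdot (n/d)$, and then unwinding the canonicity constraint by a M\"obius- or Vaughan-type identity and partitioning into $O((\log x)^{O(1)})$ dyadic boxes $(d, m) \sim (D', M')$ with $D' M' \asymp x$, I would reduce $S_{a,q}(x,y)$ up to an error $O(D)$ to a supremum of such bilinear forms $B(\alpha, \beta; D', M')$.

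To each such form I would apply either the Type~I bound, obtained by summing the geometric series in $m$,
\[
  |B| \ll x^{o(1)}\bigl(D' + M' + q + x/q\bigr),
\]
or the Type~II bound, obtained by Cauchy--Schwarz in $d$ followed by the elementary estimate $\sum_{d \sim D'} \eq(adh) \ll \min(D', q/\|ah/q\|)$ for $h \ne 0$,
\[
  |B|^2 \ll x^{o(1)}\, D' M'\,\bigl(D' + M' + q + x/q\bigr).
\]
Substituting $D' M' \asymp x$, the Type~II bound gives $|B|/x \ll x^{-1/2 + o(1)}(D + x/D + q + x/q)^{1/2}$, from which the additive terms $q^{-1/2}$ and $(x/q)^{-1/2}$ of the theorem extract naturally. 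The remaining factor $(D + x/D)^{1/2}$ is then minimised by choosing $D$ within the range allowed by the smoothness decomposition. When $y$ is small, the joint use of Type~I and Type~II over several dyadic ranges of $D$ is to yield the improved factor $(x/y)^{-1/4}$; when $y$ is large, the optimisation with $D$ in $[x^{2/5}, x^{3/5}]$ is to give the uniform factor $x^{-1/5}$, matching Vinogradov's classical $N^{4/5}$-type bound for exponential sums over primes.

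The main obstacle is Step~1: turning the canonical decomposition $n = d(n)\, m$ into a clean bilinear form with bounded coefficients $\alpha_d, \beta_m$. The constraint ``$d$ is the canonical divisor of $dm$'' is combinatorially delicate; removing it without introducing either unbounded weights or logarithmic losses beyond $x^{o(1)}$ requires a carefully engineered identity, most naturally a Vaughan-style decomposition of $\1_{\cS(x,y)}$ into manageable Type~I and Type~II pieces. A secondary difficulty is matching the optimisation uniformly in both $q$ and $y$, since the admissible range of the pivot $D$ is constrained simultaneously by the smoothness threshold and by the size of $q$; the analysis must be split across several sub-regimes (small $q$, intermediate $q$, large $q$, and small versus large $y$) to ensure that each of the three additive contributions in the theorem arises exactly as stated, without spurious cross-terms.
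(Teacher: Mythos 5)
There is a genuine gap, and it is concentrated in the large-$y$ regime, i.e.\ precisely where the theorem's bound $x^{-1/5}$ beats $(x/y)^{-1/4}$ (namely $y>x^{1/5}$). Your pivot-divisor decomposition only guarantees a divisor $d$ of $n$ in $(D/y,D]$, and when $y$ is large this interval is essentially $[1,D]$: for instance, if $y>D$ then a prime $p\in(D,y]$ is $y$-smooth but its only divisor $\le D$ is $1$, so no bilinear structure is produced at all; more generally, the boxes with $d$ near $D/y$ have the complementary variable $m$ of length up to $xy/D$, where your Type~II bound gives only $x^{1/2}(xy/D)^{1/2}$, far weaker than $x^{4/5}$, and your Type~I bound is unavailable because the coefficients $\beta_m$ produced by unwinding the canonicity and smoothness constraints are not flat (one cannot ``sum the geometric series in $m$''). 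Thus choosing $D\in[x^{2/5},x^{3/5}]$ does not confine the bilinear variable to $[x^{2/5},x^{3/5}]$, and the claimed uniform saving $x^{-1/5}$ does not follow from arbitrary-coefficient bilinear estimates alone; genuine prime-sum technology is needed. The paper handles $y>x^{1/5}$ quite differently: by Buchstab's identity (Lemma~\ref{lem:decomp-ylarge}) one writes $S_{a,q}(x,y)$ as the complete sum $\sum_{n\le x}\eq(an)$ (which is $O(q)$) plus sums $\sum_{p_1,\dots,p_j>y}\sum_{m\le x/p_1\cdots p_j}\eq(amp_1\cdots p_j)$ with $1\le j\le 5$, where the $m$-coefficient is identically $1$ (so Type~I is available) and the prime variables are treated by Vaughan's identity (Lemma~\ref{lem:sum-p}, giving $x^{4/5}+x^{1/2}(x/q+q)^{1/2}$ for $j=1$) and by bilinear bounds plus Heath-Brown's identity for $j\ge 2$ (Lemma~\ref{lem:sum-p-q}). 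This Buchstab-plus-Vaughan/Heath-Brown step is the missing ingredient behind the $x^{-1/5}$ term; invoking ``Vinogradov's classical $N^{4/5}$ bound'' by name does not supply it within your decomposition.

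For the small-$y$ part your plan is essentially the paper's, but note that your acknowledged ``Step~1 obstacle'' is resolved there without any M\"obius/Vaughan unwinding: one factors each smooth $n>w$ \emph{uniquely} as $n=km$ with $w\le k<wP(k)$ and $P(k)\le p(m)$ (Lemma~\ref{lem:decomp-ysmall}), and the only coupling $P(k)\le p(m)$ involves quantities of size at most $y$, so it is separated analytically (via~\cite[Lemma~13.11]{IwKow}), yielding bounded coefficients with $M\in[w,wy]$. Applying Lemma~\ref{lem:sum-bilin} and choosing $w=(x/y)^{1/2}$ then gives exactly the term $x(x/y)^{-1/4}+xq^{-1/2}+x(x/q)^{-1/2}$, matching your intended outcome for $y\le x^{1/5}$; a M\"obius unwinding of a ``canonical divisor'' condition would risk unbounded (divisor-type) weights, so you should adopt the unique-factorisation route if you repair the argument.
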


The saving~$x^{-1/5}$ corresponds to the classical Vinogradov bound~$x^{4/5}$ on exponential sums over primes~\cite{Vinog45}.

Using partial summation, as in the proof of~\cite[Theorem~13]{FoTe}, 
we now estimate the sums $T_{\vartheta}(x,y)$.

\begin{cor}\label{cor: Sum T}
  Uniformly for $x \ge y \ge 2$ and all real $\vartheta$, we have
  \[
    \left|T_{\vartheta}(x,y)\right|   
    \le x^{1+o(1)} \( \min\{x^{-1/5}, (x/y)^{-1/4}\}  +  q^{-1/2} + (x/q)^{-1/2}\) \sL,
   \]
  where $\sL$ is given by~\eqref{eq: L aq}. 
\end{cor}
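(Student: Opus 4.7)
The plan is to deduce the corollary from Theorem~\ref{thm: Sum S} by Abel summation, following the blueprint of the deduction of~\eqref{eq: FT Real} from~\eqref{eq: FT Rat} in~\cite[Theorem~13]{FoTe}. The factor $\sL$ in the statement is then naturally interpreted as the cost of replacing $\vartheta$ by its rational approximant $a/q$.

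First, I would fix any pair of integers $a, q$ with $\gcd(a,q) = 1$ and set $\beta = \vartheta - a/q$, so that $\e(\vartheta n) = \eq(an)\,\e(\beta n)$. Applying Abel summation to split off the slowly oscillating factor $\e(\beta t)$ gives
\[
  T_\vartheta(x,y) = \e(\beta x)\, S_{a,q}(x,y) - 2\pi i \beta \int_1^x S_{a,q}(t,y)\,\e(\beta t)\, dt,
\]
whence
\[
  |T_\vartheta(x,y)| \le \bigl(1 + 2\pi |\beta| x\bigr) \max_{1 \le t \le x} |S_{a,q}(t,y)| \ll \sL \cdot \max_{1 \le t \le x} |S_{a,q}(t,y)|
\]
by the definition~\eqref{eq: L aq} of $\sL$.

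It then remains to bound $\max_{t \le x}|S_{a,q}(t,y)|$ by quoting Theorem~\ref{thm: Sum S} at the upper endpoint $t = x$. For this I would check that each of the four contributions produced by Theorem~\ref{thm: Sum S} --- namely $t^{4/5+o(1)}$, $t^{3/4+o(1)} y^{1/4}$, $t^{1+o(1)} q^{-1/2}$ and $(tq)^{1/2+o(1)}$ --- is non-decreasing in $t$ (up to the $t^{o(1)}$ factor), so that the maximum over $1 \le t \le x$ is attained at $t = x$ and the bound of Theorem~\ref{thm: Sum S} at $x$ can be quoted verbatim.

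The only slightly delicate point is the small range $t \le y$, where Theorem~\ref{thm: Sum S} is not literally stated; but there the trivial bound $|S_{a,q}(t,y)| \le t \le y$ suffices, and it is harmlessly absorbed into the first term of Theorem~\ref{thm: Sum S} evaluated at $t = x$, which is already of size at least $y^{1+o(1)}$. I do not anticipate any substantive obstacle: the deduction is a clean application of Abel summation combined with this monotonicity observation.
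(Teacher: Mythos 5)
Your overall route---fix a reduced fraction $a/q$, write $\e(\vartheta n)=\eq(an)\e(\beta n)$ with $\beta=\vartheta-a/q$, remove the factor $\e(\beta t)$ by partial summation to get $|T_\vartheta(x,y)|\ll \sL\,\max_{1\le t\le x}|S_{a,q}(t,y)|$, and then invoke Theorem~\ref{thm: Sum S}, noting that each term of its bound is nondecreasing in $t$ so the value at $t=x$ dominates---is exactly the paper's proof, which consists of the single remark that one applies partial summation as in~\cite[Theorem~13]{FoTe}.

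There is, however, one incorrect justification, in your treatment of the range $t<y$. You claim the trivial bound $|S_{a,q}(t,y)|\le t\le y$ is harmless because the first term of Theorem~\ref{thm: Sum S} at $t=x$ is of size at least $y^{1+o(1)}$. That term is $x^{1+o(1)}\min\{x^{-1/5},(x/y)^{-1/4}\}=\min\{x^{4/5},x^{3/4}y^{1/4}\}\,x^{o(1)}$, and when $y>x^{4/5}$ it equals $x^{4/5+o(1)}<y$; moreover the other terms need not help (take $y=x^{9/10}$, $q=x^{1/2}$, $\vartheta=a/q$ so that $\sL=1$: the right-hand side of the corollary is then $x^{4/5+o(1)}$, while your bound for the contribution of $t<y$ is $y=x^{9/10}$). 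So the step as justified fails in that regime. The repair is immediate: for $t<y$ every integer $n\le t$ is automatically $y$-smooth, so $S_{a,q}(t,y)=\sum_{n\le t}\eq(an)$ is a geometric progression, bounded by $\min\{t,q\}$ when $q\ge 2$; if $q\le x$ this is at most $(xq)^{1/2}=x\,(x/q)^{-1/2}$, one of the admitted terms, while for $q>x$ or $q=1$ the corollary's right-hand side is at least $x^{1+o(1)}$ and the statement is trivial. (Alternatively, observe that $\cS(t,y)=\cS(t,t)$ for $t<y$ and apply Theorem~\ref{thm: Sum S} with $y$ replaced by $t$.) With this correction---and the standard remark that for, say, $t\le x^{1/2}$ the trivial bound $t\le (xq)^{1/2}$ already suffices, so the $t^{o(1)}$ factors are uniformly $x^{o(1)}$ on the remaining range---your deduction is complete and coincides with the paper's.
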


A variety of other bounds can be found in~\cite{dlB1, dlB2, dlBGr1, dlBTen, Drap,  Harp, MaWa}, 
which improve~\eqref{eq: FT Real}  and~\eqref{eq: FT Rat}  (especially for small $y$), however they  do not seem to affect our improvement.

We also recall that for the classical Waring problem and also similar Waring type problems 
exponential sums with powers $n^\nu$ have also been considered, see~\cite{BrWo-1,BrWo-2,DrSh,Vau,Wool1,Wool2} and references therein. 
For rational  exponential sums with other non-linear functions over the integers 
$n \in \cS(x,y)$ see~\cite{Gong,QiZh}.
Motivated by  these results we consider the rational exponential sums
\[
  S_{\nu,a,q}(x,y)  =  \sum_{n \in \cS(x,y)} \eq\(an^\nu\)
\]
with $\nu$-th powers of smooth numbers, where~$\nu\in \Z\setminus \{0\}$. 
Combining our approach to proving Theorem~\ref{thm: Sum S}
with some bounds from~\cite{Mac} we obtain new estimates on these sums too.
We however have to assume that $q$ is prime. 

First we observe that the bound~\eqref{eq: FT Rat} can be extended 
to the sums $S_{\nu,a,q}(x,y)$ (at least for a prime $q$, and to be 
nontrivial this generalisation still requires $x \ge q^{1+\varepsilon} y$ with some 
fixed $\varepsilon > 0$).
We obtain a bound which gives a power saving for smaller values of $x$.

\begin{theorem}\label{thm: Sum Snu} Let $\nu \ne 0$ be a fixed positive integer, and let~$\eps>0$.
  There exists~$\delta>0$, which depends only on $\nu$ and $\eps$ and  such that the following holds.
  Assume that a prime $q\ge 1$ and real $x \ge y\ge 2$ satisfy~$q\le x^2$.
  Then, uniformly over $a$ with $\gcd(a,q)=1$, we have the estimates:
  \[ S_{\nu, a, q}(x, y) \le x^{1+o(1)} \min\left\{E_1, E_2, E_3, E_4\right\}, \]
  where
  \begin{subequations}\begin{align}
      & E_1= (x/y)^{-1/4} + q^{-1/2} + (x/q)^{-1/2}, \label{Snu-bilin}\\
      & E_2=  y^{-1/2} + x^{-1/4}q^{1/8} + q^{-1/2} + (x/q)^{-1/2}, \label{Snu-bilin+typeI} \\
      & E_3= \min\left\{(x/q)^{-1/4}, (x/y)^{-1/4}q^{1/8}\right\} + q^{-1/4} + (x/y)^{-1/4}, \label{Snu-FM}\\
      & E_4= \(q^{-1/4} + q^{3/4+\eps} x^{-1}\)^\delta. \label{Snu-FKM}
    \end{align}
  \end{subequations}
\end{theorem}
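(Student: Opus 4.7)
The plan is to follow the strategy used to prove Theorem~\ref{thm: Sum S}, replacing the Vinogradov-type bilinear input (valid only for $\nu=1$) by estimates adapted to the non-linear exponential $e_q(a n^\nu)$. Each of the four bounds $E_1,\dots,E_4$ will come from a different choice of bilinear estimate, and the final inequality will follow by taking the minimum.

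\textbf{Step 1: Bilinear decomposition.} As in the proof of Theorem~\ref{thm: Sum S}, I would begin by exploiting the factorisation structure of $n \in \cS(x,y)$. Writing $n = mp$ with $p$ a prime divisor of $n$ (or applying a Buchstab-type identity weighted by $\log n$, so that multiplicities are tracked correctly), and then performing a dyadic splitting of the variables, one reduces the estimation of $S_{\nu,a,q}(x,y)$, up to an $x^{o(1)}$ loss, to bilinear sums of the shape
\[
B(M,P) \;=\; \sum_{m \sim M}\; \sum_{\substack{p \sim P \\ p \text{ prime}}} \alpha_m \beta_p \, e_q\bigl(a(mp)^\nu\bigr),
\]
where $MP \asymp x$, $P \le y$, and the coefficients $\alpha_m, \beta_p$ are bounded by $x^{o(1)}$. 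The same reduction also produces ``Type~I'' sums in which $\beta_p \equiv 1$ and the $m$-range is smooth.

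\textbf{Step 2: Bounds $E_1$ and $E_2$.} For the bound $E_1$ in~\eqref{Snu-bilin}, I would apply Cauchy--Schwarz in the $m$-variable to $B(M,P)$ and expand the square, reducing to an average over $p_1, p_2 \sim P$ of a Weyl-type sum
\[
\sum_{m \sim M} e_q\bigl(a m^\nu (p_1^\nu - p_2^\nu)\bigr).
\]
Applying Vaughan's~(1975) estimate for incomplete $\nu$-th power exponential sums to the inner sum, then averaging over $p_1,p_2$ with the usual separation of the diagonal, yields~\eqref{Snu-bilin}. For $E_2$ in~\eqref{Snu-bilin+typeI}, one instead uses the Type~I bound on $\sum_{m \sim M} e_q(a(mp)^\nu)$ (Weyl for $m$-ranges long enough to complete, with the standard $q^{1/2}$ loss from completion), combining the smoothness $M \ge y$ to extract the $y^{-1/2}$ saving.

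\textbf{Step 3: Bounds $E_3$ and $E_4$ via trace functions.} For a prime $q$, the function $x \mapsto e_q(a x^\nu)$ is a non-trivial isotypic trace function (of bounded conductor depending on $\nu$). Applying the bilinear estimates of Fouvry--Kowalski--Michel~(2014) to $B(M,P)$, together with the P\'olya--Vinogradov completion for the $m$-sum, yields~\eqref{Snu-FM}. The power-saving bound~\eqref{Snu-FKM} then follows from the short-range sum--product type estimates of~\cite{Mac} (which require $q \le x^2$, explaining that hypothesis), producing an $x^{-\delta}$ saving for some $\delta = \delta(\nu,\eps)>0$.

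\textbf{Main obstacle.} The principal difficulty, relative to the linear case treated in Theorem~\ref{thm: Sum S}, is that the inner sum $\sum_m e_q(\beta m^\nu)$ is no longer a geometric progression; the bounds of Vaughan and of Fouvry--Kowalski--Michel are weaker and more sensitive to the size of $q$ relative to the length of summation. The delicate task is to balance the cutoff between the Type~I range (where $M$ is large and completion is affordable) and the Type~II range (where Cauchy--Schwarz is used), while simultaneously tracking the optimal bilinear input depending on whether $q$ is small or close to $x$, so that all four bounds $E_1,\dots,E_4$ emerge simultaneously from a single decomposition.
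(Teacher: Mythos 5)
Your overall architecture (a factorisation of smooth numbers into bilinear structure, then four different analytic inputs) is in the spirit of the paper, but two of the four bounds would not come out of the steps you describe. For \eqref{Snu-bilin}, applying Cauchy--Schwarz in $m$ and then a pointwise bound (Weil, after completion) on $\sum_{m\sim M}\eq\(am^\nu(p_1^\nu-p_2^\nu)\)$ cannot produce the terms $q^{-1/2}$ and $(x/q)^{-1/2}$: pointwise complete-sum bounds only yield a saving of strength $q^{-1/4}$, which is exactly the strength of the trace-function bound behind \eqref{Snu-FM}, not of \eqref{Snu-bilin}. The paper instead uses the morphism property $(mn)^\nu=m^\nu n^\nu$, so that after splitting into blocks of lengths $\min\{M,q\}$ and $\min\{N,q\}$ the sum is a genuine bilinear form in $m^\nu,n^\nu$ to which the Vinogradov orthogonality bound $\sqrt{qXY}$ applies (Lemma~\ref{lem:sum-bilin}). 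Moreover, your rigid splitting $n=mp$ with one prime $p\le y$ cannot give the term $(x/y)^{-1/4}$: the prime removed may be far smaller than $y$, and even when $p\asymp y$ the two ranges are too unbalanced once $y<x^{1/3}$. What is needed is the flexible decomposition of Lemma~\ref{lem:decomp-ysmall} (every $y$-smooth $n>w$ has a divisor in $[w,wy]$), which permits the cut $w=(x/y)^{1/2}$ for \eqref{Snu-bilin} and the optimisation in $w$ that produces the case distinctions in \eqref{Snu-FM}. Your sketch of \eqref{Snu-bilin+typeI} likewise does not account for the term $x^{-1/4}q^{1/8}$, which in the paper arises from Vaughan's identity (this is the actual role of Vaughan 1975, not an ``incomplete $\nu$-th power sum'' estimate) applied inside the convolution-with-one-prime sums coming from the Buchstab identity (Lemmas~\ref{lem:decomp-ylarge} and~\ref{lemma:sump-monom}).

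The more serious gap is \eqref{Snu-FKM}. Its shape $\(q^{-1/4}+q^{3/4+\eps}x^{-1}\)^{\delta}$ is that of the Fouvry--Kowalski--Michel theorem on trace functions against primes; the paper proves it for $y>x^{1/3}$ via the Buchstab identity, the Weil bound for the complete sum, and an extension of the FKM argument (Heath-Brown identity plus their correlation estimates) to the convolutions $\sum_{p>y}\sum_{m\le x/p}K(mp)$ and $\sum_{p_1,p_2>y}\sum_{m}K(mp_1p_2)$; this extension (Lemma~\ref{lem:trace-mp}) is the technical core of that case, while for $y\le x^{1/3}$ the bound is deduced from \eqref{Snu-FM}. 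Your attribution of \eqref{Snu-FKM} to the incidence/sum--product bounds of~\cite{Mac} is not what the paper does, and there is no indication such bounds give a power saving up to $q$ nearly $x^{4/3}$, where every bilinear range available from the smooth-number decomposition is shorter than $q^{1/2}$; nor does~\cite{Mac} explain the hypothesis $q\le x^2$. As written, Steps~2 and~3 would at best establish bounds of the strength of \eqref{Snu-FM}, not \eqref{Snu-bilin}, \eqref{Snu-bilin+typeI} or \eqref{Snu-FKM}.
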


The estimates~\eqref{Snu-bilin} and \eqref{Snu-bilin+typeI} are non-trivial only inside to~$q<x$, but are numerically better in most of that range.
The bounds~\eqref{Snu-FM} and \eqref{Snu-FKM} hold true upon replacing the function~$n\mapsto \eq(an^\nu)$ by any non-exceptional trace function, in the terminology of~\cite{FKM}, unlike the estimates~\eqref{Snu-bilin} and \eqref{Snu-bilin+typeI} which use the morphism property of monomials. 
The regions where these bounds are non-trivial are drawn in Figure~\ref{fig:range-nontriv}.

\begin{figure}[h]
  \centering
  \begin{tikzpicture}[scale=4]
    \fill[pink!50] (0,0) -- (1/3, 1/3) -- (1/3, 2/3) -- (1/5, 4/5) -- (0, 2/3) -- cycle;
    \fill[olive!50] (0,0) -- (1,0) -- (1,1) -- (1/2,1) -- (1/5,4/5) -- (1/3,2/3) -- (1/3,1/3) -- cycle;
    \fill[blue!50] (1/2,1) -- (1,1) -- (1,4/3) -- (1/3,4/3) -- cycle;
    \fill[red!50] (0,2/3) -- (1/2,1) -- (0,2) -- cycle;

    \node at (1/6, 1/2) {$E_1$};
    \node at (2/3, 1/2) {$E_2$};
    \node at (1/5, 6/5) {$E_3$};
    \node at (3/4, 1.15) {$E_4$};

    \draw[dashed] (1/3,4/3) -- (0, 4/3) node[left] {$4/3$}; 

    \draw[->] (-0.1,0) -- (1.2,0) node[right] {$\alpha$};
    \draw[->] (0,-0.1) -- (0,2.2) node[left] {$\beta$};
    \draw (1,0.05) -- (1,-0.05) node[below] {$1$};
    \draw (0.05,1) -- (-0.05,1) node[left] {$1$};
    \draw (0.05,2) -- (-0.05,2) node[left] {$2$};
    \node[below left] at (0, 0) {$0$};
  \end{tikzpicture}
  \caption{Ranges where the bounds from Theorem~\ref{thm: Sum Snu} are relevant. Here~$y=x^\alpha$ and~$q=x^\beta$.}
  \label {fig:range-nontriv}
\end{figure}

We remark that $\nu$ in Theorem~\ref{thm: Sum Snu} below can also be chosen negative, granted we restrict the sum to integers coprime with~$q$, 
that is, to $n$ for which $n^\nu$ is well-defined modulo $q$.

In particular, we get the following bound which summarises the non-trivial range allowed by Theorem~\ref{thm: Sum Snu}: 

\begin{cor}\label{cor: Sum Snu} Let $\nu \ne 0$ be a fixed positive integer, and let~$\eps>0$.
  There exists~$\delta>0$, which depends only on $\nu$ and $\eps$ and  such that
for a prime $q\ge 1$ and real $x \ge y\ge 2$
  \[  S_{\nu, a, q}(x, y) \ll x^{1-\delta}  \]
  holds uniformly in the range
  \[  
  x^\eps \le q \le \max\left\{x^{4/3-\eps}, x^{2-\eps}y^{-2}\right\}.  \]
\end{cor}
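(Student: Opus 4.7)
The plan is to set $y = x^\alpha$ and $q = x^\beta$, so that the hypotheses translate into $0 \le \alpha \le 1$ (from $2 \le y \le x$) and $\eps \le \beta \le \max\{4/3-\eps,\, 2(1-\alpha)-\eps\}$. I will apply Theorem~\ref{thm: Sum Snu} with an auxiliary parameter $\eps_0$ (in place of the ``$\eps$'' of that statement) chosen sufficiently small in terms of $\eps$; this provides some $\delta_0 = \delta_0(\eps,\nu)>0$ and the estimate $S_{\nu,a,q}(x,y) \le x^{1+o(1)} \min\{E_1,E_2,E_3,E_4\}$. It therefore suffices to show that $\min\{E_1,E_2,E_3,E_4\} \le x^{-\delta_1}$ for some $\delta_1 = \delta_1(\eps,\nu) > 0$, since then any positive $\delta$ strictly smaller than $\delta_1$ absorbs the $x^{o(1)}$ factor. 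Note that $q \le x^{2-\eps} \le x^2$, so the hypothesis $q \le x^2$ of Theorem~\ref{thm: Sum Snu} is automatic.

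I split on $\beta$ into three regions. \emph{Region 1:} $\beta \le 1-\eps/4$. If $\alpha \le 1-\eps/2$, I use $E_1$: the estimates $(x/y)^{-1/4} \le x^{-\eps/8}$, $q^{-1/2} \le x^{-\eps/2}$, $(x/q)^{-1/2} \le x^{-\eps/16}$ give power saving. If $\alpha > 1-\eps/2$, I use $E_2$: here $y^{-1/2} \le x^{-(1-\eps/2)/2}$ is small, while $x^{-1/4}q^{1/8}\le x^{-1/8+O(\eps)}$ and the two last terms are handled as above. \emph{Region 2:} $1-\eps/4 < \beta \le 4/3-\eps$. I use $E_4$: the estimate $q^{-1/4} \le x^{-\eps/4}$ is immediate, and the identity $(3/4+\eps_0)(4/3-\eps) - 1 = -3\eps/4 + O(\eps_0)$ gives $q^{3/4+\eps_0} x^{-1} \le x^{-c\eps}$ for some absolute $c>0$, provided $\eps_0$ is small enough in terms of $\eps$; hence $E_4 \le x^{-c\eps\delta_0}$. \emph{Region 3:} $\beta > 4/3-\eps$. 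The upper bound on $\beta$ then forces $\beta \le 2(1-\alpha)-\eps$, and in particular $\alpha < 1/3$. I use $E_3$ through its option $(x/y)^{-1/4}q^{1/8}$: the exponent $-(1-\alpha)/4 + \beta/8$ collapses to at most $-\eps/8$ precisely because $\beta \le 2(1-\alpha)-\eps$, and the remaining terms satisfy $q^{-1/4} \le x^{-1/3 + O(\eps)}$, $(x/y)^{-1/4} \le x^{-1/6}$.

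The main obstacle here is not analytic but organisational: one must verify that the three regions exactly cover the full permissible domain for $(\alpha,\beta)$, and that the ancillary constraints enabling each bound---notably $\alpha < 1/3$ when $E_3$ is invoked, and $\beta < 4/3$ when $E_4$ is invoked---emerge automatically at the required moment. This is geometrically transparent from Figure~\ref{fig:range-nontriv}, but deserves an explicit check along the boundary lines $\beta = 1$, $\beta = 4/3-\eps$, and $\beta = 2(1-\alpha)-\eps$. Within each region the numerical estimates reduce to one-line elementary inequalities, and the final $\delta$ can be taken proportional to $\min(\eps, \eps\delta_0)$.
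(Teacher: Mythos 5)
Your case analysis is correct and is exactly the verification the paper leaves implicit (the corollary is presented as a summary of where the bounds $E_1$--$E_4$ of Theorem~\ref{thm: Sum Snu} give a power saving, cf.\ Figure~\ref{fig:range-nontriv}): each of your three regions checks out, including the deduction $\beta\le 2(1-\alpha)-\eps$, hence $\alpha<1/3$, when invoking $E_3$, and the choice of a theorem-parameter $\eps_0$ small relative to $\eps$ when invoking $E_4$. The only cosmetic point worth adding is that one may assume $\eps$ small (say $\eps\le 1/10$), since decreasing $\eps$ only enlarges the stated range, which makes inequalities such as $q^{-1/4}\le x^{-1/3+\eps/4}$ genuine power savings without further comment.
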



These estimates improve the range $x \ge q^{1+\varepsilon} y$, accessible via the approach of~\cite{FoTe}. We also improve the dependency in~$y$ in the estimate
\[
  \left|S_{\nu,a,q}(x,y) \right|  \le xq^{-1/4+o(1)} + q^{1/8} x^{3/4+o(1)} y^{1/2}
\]
implied by a result of Br{\"u}dern  and Wooley~\cite[Theorem~1.1]{BrWo-2}; compare with~\eqref{Snu-FM}.
We also note that our argument applies to more general sums twisted by multiplicative 
functions, see Section~\ref{sec:com}.

\section{Bounds on multilinear exponential sums}

\subsection{Preliminaries} 

We recall that  the notations $U = O(V)$, $U \ll V$ and $ V\gg U$  
are equivalent to $|U|\leqslant c V$ for some positive constant $c$, 
which throughout this work is absolute, unless indicated otherwise. Furthermore, we use $U\asymp V$ 
in the case when $U \ll V \ll U$.

We also write $U = V^{o(1)}$ if for any fixed $\varepsilon$ we have 
$V^{-\varepsilon} \le |U |\le V^{\varepsilon}$ provided that~$V$ is large 
enough. 

For an integer $\ell \ne 0$ we denote by $\tau(\ell )$ the number of positive integer  divisors of~$\ell $, 
for which we very often use the well-known bound
\begin{equation}
  \label{eq:tau}
  \tau(\ell ) = |\ell |^{o(1)} 
\end{equation}
as $|\ell | \to \infty$, see~\cite[Equation~(1.81)]{IwKow}.

The letter $p$, with or without subscripts, always denotes a prime number.

Finally, to simplify the statements of our results, we use the notation  
\[
  U \lesssim V
\]
to denote that $|U| \le V x^{o(1)}$ as $x\to \infty$. 

\subsection{Bounds arising from the theory of trace function}

We recall two bounds arising from the theory of trace functions. We recall the definition of a non-exceptional trace function~$K:\Z\to\C$ in~\cite[Definition~1.3]{FKM}. In particular,  for any prime~$q$ and~$\gcd(a, q)=1$ maps of the form~$n\mapsto \eq(a n^\nu)$ are trace functions, which are non-exceptional if~$\nu\not\in\{0, 1\}$. See~\cite[Remark~1.4]{FKM} for this and other concrete examples of 
trace functions. The precise definition of exceptional trace functions is given after~\cite[Remark~1.4]{FKM}, and in particular 
excludes the case $\nu=1$.

The first result is~\cite[Theorem~1.17]{FKM}, and concerns Type~II sums  for trace functions. 

\begin{lemma} \label{lem:FKM-bilin}
  Let~$K$ be a non-exceptional trace function.
  Let~$q$ be a prime, and~$(\alpha_m), (\beta_n)$ be bounded sequences supported on integers coprime with~$q$ in the intervals~$[M, 2M]$ and~$[N, 2N]$ respectively, where~$M, N\ge 1$ and~$MN\asymp x$. Then
  \[  \sum_{M \le m \le 2M}  \sum_{N \le n \le 2N} \alpha_m \beta_n K(mn) \lesssim x \( q^{-1/4} + M^{-1/2} + q^{1/4} N^{-1/2}\).  \]
\end{lemma}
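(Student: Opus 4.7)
The plan is to eliminate the unknown coefficients $(\beta_n)$ by applying Cauchy--Schwarz in the $n$-variable, then exploit square-root cancellation for complete correlation sums of $K$ modulo $q$. Writing $\Sigma$ for the bilinear sum in the statement and using $|\beta_n| \le 1$, Cauchy--Schwarz gives
\[
|\Sigma|^2 \le N \sum_{N \le n \le 2N} \Bigl| \sum_{M \le m \le 2M} \alpha_m K(mn) \Bigr|^2 = N \sum_{m_1,m_2} \alpha_{m_1}\overline{\alpha_{m_2}} \, C(m_1,m_2),
\]
where $C(m_1,m_2)=\sum_{N\le n\le 2N} K(m_1 n)\overline{K(m_2 n)}$. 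The diagonal contribution $m_1=m_2$ is bounded trivially by $O(MN)$ using the pointwise bound $|K|\lesssim 1$, whose implicit constant depends only on the conductor of $K$ and is absorbed by $\lesssim$.

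For the off-diagonal terms with $m_1\not\equiv m_2\pmod q$, I would complete the sum over $n$ modulo $q$ by Fourier inversion, expressing the indicator of $[N,2N]$ as a combination of additive characters modulo $q$ with total mass $\lesssim q^{1/2}$. This yields
\[
C(m_1,m_2) \lesssim \max_{h\in\F_q}\Bigl|\sum_{n\in\F_q} K(m_1 n)\overline{K(m_2 n)}\,\eq(hn)\Bigr| + N q^{-1/2}.
\]
After the change of variable $n\mapsto m_1^{-1} n$, the inner complete sum becomes a correlation of $K$ against a multiplicative translate of $\overline{K}$, twisted by an additive character. The few exceptional pairs with $m_1\ne m_2$ but $m_1\equiv m_2\pmod q$, if any, are handled in the diagonal way and contribute negligibly.

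The main obstacle is the square-root cancellation bound $O(q^{1/2})$ for these complete twisted correlation sums. This is the deep algebro-geometric input: it rests on Deligne's proof of the Riemann hypothesis for varieties over $\F_q$, combined with Katz's analysis of the geometric monodromy of the relevant $\ell$-adic sheaves. The non-exceptionality hypothesis on $K$ is precisely what ensures that the sheaf associated to $K(m_1 n)\overline{K(m_2 n)}$ has no invariant part under the Frobenius once $m_1/m_2\not\equiv 1\pmod q$, so that Deligne's bound applies with an implicit constant depending only on the conductor. Granting this bound, $C(m_1,m_2)\lesssim q^{1/2} + Nq^{-1/2}$, so the off-diagonal contribution to $\sum_n|\cdots|^2$ is $\lesssim M^2(q^{1/2}+Nq^{-1/2})$. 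Combining this with the diagonal estimate, taking square roots, and using $MN\asymp x$, I obtain
\[
|\Sigma| \lesssim \bigl(NMN + NM^2 q^{1/2} + NM^2 \cdot Nq^{-1/2}\bigr)^{1/2} \lesssim x\bigl(M^{-1/2} + q^{-1/4} + q^{1/4}N^{-1/2}\bigr),
\]
which is the claimed bound.
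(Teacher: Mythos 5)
The paper itself does not prove this lemma: it is quoted directly from Fouvry--Kowalski--Michel as~\cite[Theorem~1.17]{FKM}, and the proof there runs essentially along the lines you sketch --- Cauchy--Schwarz to eliminate one set of coefficients, completion of the resulting incomplete correlation sums modulo~$q$, and square-root cancellation for the complete sums coming from Deligne's Riemann Hypothesis via the sheaf formalism. Your bookkeeping is also correct: the diagonal contributes $NM^{1/2}\asymp xM^{-1/2}$ after the square root, and the off-diagonal terms give $x\,q^{1/4}N^{-1/2}$ and $x\,q^{-1/4}$, which is exactly the stated bound (the $\log q$ losses from completion are absorbed into~$\lesssim$).

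The one genuine imprecision is your claim that non-exceptionality forces square-root cancellation of $\sum_{n \bmod q} K(m_1 n)\overline{K(m_2 n)}\,\eq(hn)$ for \emph{every} pair with $m_1\not\equiv m_2\pmod q$ and \emph{every}~$h$. That is false as stated: take $K(n)=\eq(an^2)$, which is non-exceptional, and a pair with $m_1\equiv -m_2\pmod q$ and $h=0$; then the complete correlation equals~$q$ and the incomplete one is $\asymp N$, even though $m_1/m_2\equiv -1\not\equiv 1$. What the correlation-sum theorem of~\cite{FKM} actually provides is square-root cancellation, with implied constant depending only on the conductor, unless the relevant transformation (here the dilation $n\mapsto (m_1/m_2)n$, together with the additive twist by~$h$) lies in a fixed exceptional set; non-exceptionality guarantees only that the set of bad ratios $m_1/m_2\pmod q$ has bounded cardinality in terms of the conductor, not that it reduces to the ratio~$1$. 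The repair is routine and does not change your conclusion: the bad pairs number $\lesssim M^2/q+M$, and estimating their correlations trivially by $O(N)$ they contribute $\lesssim M^2N^2q^{-1} + MN^2$ to $|\Sigma|^2$, hence $\lesssim xq^{-1/2}+xM^{-1/2}$ after the square root, which is dominated by the claimed bound. With this correction your argument is a faithful outline of the proof of the cited result.
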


The second result concerns special convolution with primes for trace functions. Note that  
in the case~$y>x/2$ is~\cite[Theorem~1.15, Equation~(1.3)]{FKM} (the~$m$-sum below is reduced to
just one term with~$m=1$).

\begin{lemma}
  \label{lem:trace-mp}
  Let~$q$ be a prime, and assume~$1\le y \le x$. For any~$\eps>0$, there exists~$\delta>0$ such that
  \begin{subequations}
    \begin{align}
      &\sum_{y<p\le x} \sum_{m\le x/p} K(mp) \lesssim x q^{-\delta/4} + x^{1-\delta} q^{(3/4+\eps)\delta}, \label{eq:trace-mp} \\
      &\sum_{y<p_1\le p_2\le x} \sum_{m\le x/p_1p_2} K(mp_1p_2) \lesssim x q^{-\delta/4} + x^{1-\delta} q^{(3/4+\eps)\delta}.     \label{eq:trace-mp1p2}
    \end{align}
  \end{subequations}
\end{lemma}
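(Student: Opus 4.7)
When $y \ge x/2$, the constraint $m \le x/p$ together with $p > y$ forces $m = 1$, reducing both sums to $\sum_{y < p \le x} K(p)$, to which \cite[Theorem~1.15, Eq.~(1.3)]{FKM} applies directly; I therefore assume $y < x/2$. For \eqref{eq:trace-mp}, the plan is a dyadic decomposition $\sum_P \mathcal{S}_P$ with $p \in (P, 2P]$, $P$ a power of $2$ in $[y, x]$, and a further dyadic decomposition of $m$ into blocks of length $\asymp M := x/P$. A smooth partition of unity (at $x^{o(1)}$ cost) will decouple the joint constraint $mp \le x$ within each block, and I will then handle each block according to the relative size of $P$ and $q$.

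For \emph{large} $P$, say $P \ge q^{3/4+2\eps}$, I plan to fix $m$ coprime with $q$ and apply \cite[Theorem~1.15, Eq.~(1.3)]{FKM} to the twisted trace function $n \mapsto K(mn)$, which remains non-exceptional of the same conductor since twisting by a unit $m$ corresponds to pulling back the underlying sheaf along an isomorphism. This will give
\[ \sum_{P < p \le 2P} K(mp) \lesssim P q^{-\delta_1/4} + P^{1-\delta_1} q^{(3/4+\eps)\delta_1} \]
for some absolute $\delta_1>0$. Summing over $m$ (with the $q \mid m$ contribution trivially $\ll x/q$) and using $P \ge q^{3/4+2\eps}$ to absorb the second term should yield $\mathcal{S}_P \lesssim x q^{-\delta_1/4} + x q^{-\eps\delta_1}$, of the desired shape.

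For \emph{small} $P$, $P \le q^{3/4+2\eps}$, I would apply Lemma~\ref{lem:FKM-bilin} with $\alpha_p$ the indicator of primes in $(P, 2P]$ and $\beta_m$ the indicator of the admissible $m$-block (both restricted to coprimality with $q$), obtaining $\mathcal{S}_P \lesssim x q^{-1/4} + x P^{-1/2} + x^{1/2} q^{1/4} P^{1/2}$. The third term is tame under the upper bound on $P$, but the main obstacle is the middle term $x P^{-1/2}$, which blows up when $P$ is near the lower limit $y$ (possibly $y = 1$). To overcome this, I would refine the treatment of small $P$ by expanding the prime indicator on $(P, 2P]$ via Heath-Brown's identity and dyadic decomposition into multilinear pieces; each piece should then be either a Type~I sum, amenable to standard incomplete-sum bounds for trace functions (as in \cite{FKM}), or a Type~II sum which I would bound by a more balanced application of Lemma~\ref{lem:FKM-bilin}. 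Optimising the cutoffs and taking $\delta$ small enough relative to $\eps$ and $\delta_1$ should assemble all contributions into the stated bound.

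The second inequality \eqref{eq:trace-mp1p2} should yield to the same strategy with a triple dyadic decomposition in $p_1, p_2, m$: in each block I would either treat $p_1 p_2$ as a single integer variable of size $P_1 P_2$, reducing essentially to an instance of \eqref{eq:trace-mp}, or exploit the direct bilinear structure between $m$ and $p_1 p_2$ via Lemma~\ref{lem:FKM-bilin}. The choice is dictated by the relative sizes of $P_1 P_2$ and $q$, mirroring the case split used for \eqref{eq:trace-mp}.
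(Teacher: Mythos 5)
There is a genuine gap, and it sits exactly where the content of the lemma lies: the range $x^{4/5+\eps}\le q\le x^{4/3-\eps}$, where the claimed bound is still a power saving (at $x=q^{4/5}$ the right-hand side is $x\,q^{-\delta(1/20-\eps)}$ for small $\eps$), and which is precisely what feeds~\eqref{Snu-FKM} and the edge $\beta=4/3$ in Figure~\ref{fig:range-nontriv}. Your large-$P$ step (twisting $K$ by the unit $m$ and quoting \cite[Theorem~1.15]{FKM}) is fine, but the small-$P$ treatment cannot deliver the claim there. With your crossover $P\asymp q^{3/4+2\eps}$, the third term of Lemma~\ref{lem:FKM-bilin} is $x^{1/2}q^{1/4}P^{1/2}\asymp x^{1/2}q^{5/8+\eps}$, which exceeds $x$ as soon as $q\gg x^{4/5}$; taming it forces $P\ll xq^{-1/2}$, which no longer meets the large-$P$ threshold $P\gg q^{3/4+\eps}$ unless $q\le x^{4/5-\eps}$. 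Nor does the proposed repair (Heath--Brown on the prime, then completion for Type~I and Lemma~\ref{lem:FKM-bilin} for Type~II) close the band in between. Take $x=q^{4/5}$ and $P\asymp q^{2/5}$, so $x/P\asymp q^{2/5}$: completion needs a smooth range of length $\ge q^{1/2+\eps}$ and both $m$ and $p$ are shorter; Lemma~\ref{lem:FKM-bilin} needs one block of size $\ge q^{1/2+2\eps}$ with a complement $\ge q^{\eps}$, and in the worst Heath--Brown configuration (one smooth factor $\asymp P$, all remaining factors $q^{o(1)}$) the only block sizes you can form are $q^{o(1)}$, $q^{2/5+o(1)}$ and $q^{4/5+o(1)}$ --- none admissible, and the smooth variable $m$ cannot be split multiplicatively. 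This configuration, two medium smooth variables whose product lies below $q$, is exactly why the paper's proof goes through the deeper trilinear estimate \cite[Theorem~1.16]{FKM}, which exploits two smooth variables simultaneously and saves $(1+q/(N_1N_2))^{1/2}q^{-1/8+\eps}$; that ingredient is missing from your plan.

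For comparison, the paper does not split into regimes of $P$ at all: it separates the constraint $mp\le x$ and the weight on $p$ by a partition of unity plus Mellin inversion, paying an explicit error $x\Delta$ and accepting weights whose derivatives grow like $\Delta^{-O(1)}$; it then runs Heath--Brown with the extra variable $m$ carried along as one more smooth variable and applies \cite[Theorem~1.16]{FKM}, tracking the $Q^{O(1)}$-dependence on the smoothness parameter $Q=\Delta^{-1}$ before balancing $\Delta$. That bookkeeping is not cosmetic: the Theorem~1.16 bound degrades with the smoothness of the weights, so your assertion that the joint constraint can be decoupled ``at $x^{o(1)}$ cost'' hides the very trade-off the proof must quantify. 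What your argument does establish is the lemma restricted to $q\le x^{4/5-\eps}$ (where the two-regime splitting and the twisting trick are a legitimate shortcut); the same shortfall propagates to your sketch of~\eqref{eq:trace-mp1p2}.
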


\begin{proof}
  Consider first~\eqref{eq:trace-mp}.  Let~$\Delta\in (1/x, 1]$ be some  parameter to be fixed later. 
  Using a partition of unity as in~\cite[p.~1717]{FKM}, we have an upper-bound
  \begin{equation}\label{eq:Part Unity}
    \sum_{y<p\le x} \sum_{m\le x/p} K(mp) \lesssim x\Delta +
    \abs{\sum_{p} \sum_{m} V_0(mp) V_1(p)  K(mp)} 
  \end{equation}
  for some functions~$V_0, V_1$ which are smooth on~$\R$, with supports
  \[  \supp V_0 \subseteq [L, 2L], \qquad \supp V_1 \subseteq [P, 2P],  \]
  for some~$L$ and $P$ satisfying $1 \le L \ll x$ and~$y\ll P \ll x$, and moreover~$V_0, V_1$ have derivatives bounded by~\cite[Equation~(1.1)]{FKM}, that is, 
  \begin{equation}\label{eq:derivatives V}
    x^k  V_{h}^{(k)}(x)\ll Q^{k}, \qquad h = 0,1,
  \end{equation}
  for $k =0,1, \ldots$, 
  with~$Q=\Delta^{-1}$ (all the implied constant throughout the proof may depend $k$).

  In particular,  on the right hand side of~\eqref{eq:Part Unity} and in several formulas below, 
  the sums over $p$ and $m$ are both supported over finite sets.

  Let~$ \widehat V_h$ be the Mellin inverse of $ V_h$. For any fixed $k \ge 0$ we have 
  \begin{equation}\label{eq:Bound Vh-hat}
    \widehat V_h(s) \ll \(\frac{Q}{1+|s|}\)^k,  \qquad h = 0,1.
  \end{equation} 

  Hence
  \[
    V_h(y) = \frac1{2\pi i}\int_{\abs{t}\le \Delta^{-2}} \widehat V(it) y^{-it} dt +  O(\Delta),  \qquad h = 0,1.
  \]  
  Thus, using above representation for $V_1$ and the bound
  \[
    \int_{\abs{t} \le \Delta^{-2}}  \abs{\widehat V_h(it)} dt \ll  Q\log Q   \lesssim  \Delta^{-1} 
  \]
  which follows from~\eqref{eq:Bound Vh-hat} taken with $k=1$  (and our assumption $\Delta > 1/x$) we derive 
  \begin{equation}\label{eq:trace-mp-beforeHB}
    \begin{split}
      \abs{\sum_{p} \sum_{m} V_0(mp) V_1(p)  K(mp)}&
      \\ \lesssim x \Delta  + \Delta^{-1} \sup_{\abs{t} \le \Delta^{-2}} & \abs{\sum_{p} \sum_{m} p^{it} V_0(mp) K(mp)}.
    \end{split}   
  \end{equation}

  Next, we fix $t$ with
  \begin{equation}\label{eq: small t}
    \abs{t} \le \Delta^{-2}
  \end{equation}
  and use the Heath-Brown identity for primes~\cite{HB1} followed by a partition of unity, as in~\cite[Section~4.1]{FKM}. It is also convenient to rename the variable~$m$ in~\eqref{eq:trace-mp-beforeHB} as~$n_1$. Thus, after fixing an arbitrary integer  $J\geq 1$ and setting $J^* = J+1$,  we are reduced to bound sums $\Sigma$ of the shape
  \begin{equation}\label{eq:identity-HB}
    \begin{split}
      \Sigma(\vM,\vN)  & =\underset{m_1, \dotsc, m_J}{\sum\dotsb\sum} \alpha_1(m_1) \dotsb \alpha_J(m_J) \\
      & \qquad \quad \times \underset{n_1, \dotsc, n_{J^*}}{\sum\dotsb\sum} V_1(n_1) V_2(n_2) \dotsb V_{J^*}(n_{J^*}) \\
      & \qquad\qquad \qquad   \times (n_2 \dotsc n_{J^*})^{it}  V_0(m_1 \dotsb m_J n_1 \dotsb n_{J^*}),
    \end{split}  
  \end{equation}
  where
  \begin{itemize}
    \item $\vM = \(M_1, \ldots, M_J\)$  and  $\vN = \(N_1, \ldots, N_{J^*}\)$  are tuples of parameters in 
    $[1/2, 2x]^{2J}$ and $[1/2, 2x]^{2J^*}$, respectively, which satisfy
    \begin{align*}
      N_2 \ge  \ldots \ge  N_{J^*}, \qquad &M_1, \ldots, M_J \le x^{1/J}, \\  
      M_1 \cdots M_J N_1\cdots &N_{J^*} \ll X
    \end{align*}
    with an implied constant depending only of $J$; 
    \item the weights $\alpha_j(m_j)$ are bounded and supported in $[M_j/2, M_j]$, $j =1, \ldots, J$; 
    \item  the maps~$V_j$ are compactly supported in $[N_j/2, N_j]$ and their derivatives satisfy
    \[
      y^k V_j^{(k)}(y) \ll 1, \qquad j =1, \ldots, J^*,
    \]
    for  every integer $k\geq 0$. 
  \end{itemize}
  
  For each~$j=2, \ldots, J^*$, let~$\widetilde V_j(y) = V_j(y) y^{it}$, 
  which, in view of~\eqref{eq: small t}, satisfies 
  \begin{equation}\label{eq:derivatives V-tilde}
    y^k \widetilde V_j^{(k)}(y) \ll \Delta^{-2k}
  \end{equation} 
  for each fixed integer $k\geq 0$. Hence we can rewrite $\Sigma$ as 
  \begin{align*}
    \Sigma(\vM,\vN)   & = \underset{m_1, \dotsc, m_J}{\sum\dotsb\sum} \alpha_1(m_1) \dotsb \alpha_J(m_J) \\
    &\qquad  \quad \times \underset{n_1, \dotsc, n_{J^*}}{\sum\dotsb\sum} V_1(n_1) \widetilde V_2(n_2) \dotsb \widetilde V_{J^*}(n_{J^*}) \\
    &\qquad  \qquad  \qquad \qquad   \quad \times V_0(m_1 \dotsb m_J n_1 \dotsb n_{J^*}). 
  \end{align*}
  Compared with~\cite[Equation~(4.1)]{FKM}, the only difference is the growth of the derivatives of~$\widetilde  V_2, \dotsc, \widetilde V_{J^*}$.
  From here, the arguments in~\cite[Section~4.2]{FKM}, which essentially use only cancellations with respect to two variables 
  of summation $n_1$ and $n_2$.  Recalling~\eqref{eq:derivatives V} and~\eqref{eq:derivatives V-tilde},  we see that we have 
  $Q_U, Q_V, Q_W \le Q^{O(1)}$ in the condition of~\cite[Theorem~1.16]{FKM},  and thus the bound~\cite[Equation~(4.2)]{FKM}
  becomes
  \[
    \Sigma(\vM,\vN)   \ll    Q^A x \(1 + \frac{q}{N_1N_2}\)^{1/2} q^{-1/8+\eps} 
  \]
  for some~$A $, depending only on $\eps$. Following these arguments, we obtain
  \[  \sum_{y<p\le x} \sum_{m\le x/p} K(mp) \lesssim x\Delta + q^\eps x \Delta^{-A} (1 + q/x)^{1/6} q^{-1/24},  \]
  from which the desired result follows upon balancing~$\Delta$ (which, as one can easily see, satisfies the condition $\Delta > 1/x$).
  
  The bound~\eqref{eq:trace-mp1p2} follows by an identical argument.
\end{proof}

\subsection{Bounds with monomials: bilinear forms and primes}

The following estimate is Vinogradov's classical result for bilinear exponential sums when~$\nu=1$, extended to~$\nu\neq 0$ through the use of Dirichlet characters.

\begin{lemma}
  \label{lem:sum-bilin}
  Let~$\nu\neq 0$ be an integer. Let $M, N, x \ge 2$, and $(\alpha_m), (\beta_n)$ with~$\abs{\alpha_m}\le 1$ and~$\abs{\beta_n}\le 1$ be sequences supported on integers coprime with~$q$ in dyadic intervals~$[M, 2M]$, $[N, 2N]$ respectively. Then
  \begin{align*}
    \sum_{\substack{mn \le x \\ M\le m \le 2M \\ N \le n \le 2N}}  \alpha_m & \beta_n \e_q(a (m n)^\nu) \\
    & \lesssim MN  \(M^{-1/2} + N^{-1/2} + q^{-1/2} + (MN/q)^{-1/2}\), 
  \end{align*}
  where the coprimality assumption~$\gcd(mn,q)=1$ on the supports of~$(\alpha_m)$ and~$(\beta_n)$ can be removed if~$\nu \ge 1$.
\end{lemma}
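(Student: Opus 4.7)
The plan is to reduce the sum to a classical Vinogradov-type bilinear form by the change of variables $m \mapsto m^\nu \bmod q$, then apply Cauchy--Schwarz together with the standard Ramanujan sum evaluation. First, I would dispose of the hyperbolic condition $mn \le x$ via a smooth partition of unity in the variable $mn$ (equivalently, a truncated Mellin transform of the indicator $\mathbf{1}_{(0,x]}$), reducing matters to bounding the full dyadic sum $\sum_m \sum_n \alpha_m \beta_n \eq(a(mn)^\nu)$ at the cost of an $x^{o(1)}$ factor, which is absorbed by the $\lesssim$ notation.

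Next, collapse the sum modulo $q$. For each residue $r \in \Zqx$, set $\alpha''_r = \sum_{m \in [M,2M],\, m \equiv r\,(q)} \alpha_m$, so that $|\alpha''_r| \le 1 + M/q$ and $\sum_r |\alpha''_r| \le M$, and similarly for $\beta''_s$. Aggregating further by the $\nu$-th power map, set $\alpha'_u = \sum_{r: r^\nu \equiv u\,(q)} \alpha''_r$ for $u \in \Zqx$. Since $r \mapsto r^\nu$ is $d$-to-one on $\Zqx$ with $d = \gcd(\nu, q-1) \le |\nu|$, one has $|\alpha'_u| \le d(1 + M/q)$ and $\sum_u |\alpha'_u| \le M$, with analogous bounds for $\beta'_v$. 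In these new variables the sum becomes $T = \sum_{u,v \in \Zqx} \alpha'_u \beta'_v \eq(auv)$, whose oscillation now depends only on the product $uv$ rather than on $(uv)^\nu$.

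Now apply Cauchy--Schwarz in $u$:
\[
  |T|^2 \le \Bigl(\sum_u |\alpha'_u|^2\Bigr) \Bigl(\sum_u \bigl|\sum_v \beta'_v \eq(auv)\bigr|^2\Bigr).
\]
Expanding the inner square and interchanging summations produces the Ramanujan sum $c_q(a(v_1 - v_2))$, which for $q$ prime equals $q-1$ on the diagonal $v_1 = v_2$ and $-1$ otherwise; therefore
\[
  \sum_u \bigl|\sum_v \beta'_v \eq(auv)\bigr|^2 \le q \sum_v |\beta'_v|^2.
\]
Combined with $\sum_u |\alpha'_u|^2 \le d(1+M/q) M$ and $\sum_v |\beta'_v|^2 \le d(1+N/q) N$, this yields
\[
  |T|^2 \le d^2 q MN (1+M/q)(1+N/q) = d^2 MN \bigl(q + M + N + MN/q\bigr),
\]
and the claimed bound follows after taking square roots and dividing by $(MN)^2$, the factor $d = O_\nu(1)$ being absorbed by $\lesssim$.

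The main technical obstacle is the clean handling of the constraint $mn \le x$: a purely dyadic subdivision couples $m$ and $n$, and one must use the smooth truncation / Mellin step to keep the two variables separated before the modular substitution. When $\nu = 1$ the substitution is the identity, so the argument specialises to the standard proof of Vinogradov. To drop the coprimality hypothesis when $\nu \ge 1$, estimate the contribution of $m$ or $n$ divisible by $q$ trivially; there are at most $O(MN/q)$ such pairs, which fits within the target. For composite $q$ the Ramanujan sum is replaced by $|c_q(h)| \le \gcd(h, q)$, and a routine additional accounting of off-diagonal terms (grouped by $\gcd(v_1 - v_2, q)$) yields the same conclusion.
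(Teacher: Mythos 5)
Your proposal is correct in substance and is, at bottom, the paper's own argument: the paper also first separates the constraint $mn\le x$ analytically (via Lemma~13.11 of~\cite{IwKow}, which is the rigorous form of your Mellin step, and which leaves the coefficients bounded), and then reduces to the classical complete bilinear estimate modulo $q$. The only organisational difference is that the paper chops the dyadic ranges into blocks of length $\min\{M,q\}$ and $\min\{N,q\}$ and quotes the classical bound $\sqrt{qXY}$ per block, whereas you collapse the variables onto residues and onto $\nu$-th powers (this is what the paper's phrase ``through the use of Dirichlet characters'' amounts to) and reprove that bound by Cauchy--Schwarz and the Ramanujan-sum evaluation, with $\ell^1$--$\ell^\infty$ control of the collapsed coefficients; both computations give exactly the stated bound, and your ``dividing by $(MN)^2$'' is evidently just a slip for factoring out $MN$.

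The one soft spot is composite $q$, for which the lemma is stated (and used, though only with $\nu=1$). The multiplicity of $r\mapsto r^\nu$ on $\Zqx$ equals $\gcd(\nu,q-1)$ only for prime $q$; in general it is the number of $\nu$-th roots of unity modulo $q$, which is $|\nu|^{O(\omega(q))}=q^{o(1)}$, and to absorb such factors (as well as the $\tau(q)$ loss in your $\gcd(v_1-v_2,q)$ accounting) into $\lesssim$ one should first note that the claimed bound is trivial for $q>MN$, so that $q\le MN\ll x$ may be assumed. More significantly, your removal of the coprimality hypothesis for $\nu\ge 1$ by discarding only the pairs with $q\mid m$ or $q\mid n$ is complete only for prime $q$: for composite $q$ there remain terms with $1<\gcd(m,q)<q$, on which the residue collapse has unbounded multiplicity, and one would have to extract the gcd's before collapsing. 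Since in the paper the lemma is applied with composite $q$ only for $\nu=1$ (where your substitution is the identity, no coprimality is needed, and full orthogonality replaces the Ramanujan sum) and with $\nu\neq 1$ only for prime $q$, this does not affect any application, but as a proof of the lemma in its stated generality the composite-$q$, $\nu\ge 2$ case needs that extra argument rather than the one-line remark you give.
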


\begin{proof} Clearly, we can assume that $MN \le x$ as as otherwise the sum is void.
  Hence all terms of the shape $(MN)^{o(1)}$ can be absorbed in  $ \lesssim$.

  We separate analytically the variables~$m, n$ in the condition~$mn\le x$ by means of~\cite[Lemma~13.11]{IwKow}, getting
  \begin{equation}\label{eq:BilinSum}
    \sum_{\substack{mn \le x \\ M\le m \le 2M \\ N \le n \le 2N}}  \alpha_m \beta_n \e_q(a (m n)^\nu) \lesssim \sup_{t\in\R} \abs{ \sum_{\substack{M\le m \le 2M \\ N \le n \le 2N}}  \alpha_{m,t} \beta_{n,t} \e_q(a (m n)^\nu) },
  \end{equation}
  where~$\alpha_{m,t} = \alpha_m m^{it}$ and similarly for~$\beta_{n,t}$.  We now partition the last sum into  at most 
  $(M/q+1) (N/q+1) \ll q^{-2} \max\{M, q\}\max\{N,q\}$ sums with ranges of variables $m$ and $n$ of length 
  $X = \min\{M, q\}$ and $Y = \min\{N,q\}$, respectively.  By a classical result, see, for example~\cite[Chapter~VI, Exercise~14.a]{Vinog}, each 
  of these sums is bounded by $\sqrt{qXY}$. Since   $\max\{A, B\} \min\{A, B\} = AB$ this leads  to the bound 
  \begin{align*}
    q^{-2} \max\{M, q\}&\max\{N,q\} \sqrt{q \min\{M, q\}  \min\{N.q\}}\\
    & =  q^{-2}  \sqrt{ \max\{M, q\}\max\{N,q\} q^3 M  N}\\
    &\le  (MN)^{1/2} q^{-1/2} (M + q)^{1/2} (N + q)^{1/2}, 
  \end{align*}
  which after substitution in~\eqref{eq:BilinSum} concludes the proof.
\end{proof}

%

The following result  is a variant of the classical bound for exponential sums over primes, with an additional convolution.

\begin{lemma}
  \label{lem:sum-p}
  For any $2 \le y \le x$, we have
  \[ \sum_{y < p \le x} \sum_{m \le x/p} \eq(a m p) \lesssim x^{4/5} + x^{1/2}(x/q + q)^{1/2}. \]
\end{lemma}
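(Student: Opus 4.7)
I plan to follow Vinogradov's classical method for exponential sums over primes, adapted to the present bilinear setting. First I would reduce the sum to a weighted von Mangoldt sum. Via summation by parts to absorb a missing $\log p$ factor and a direct geometric-sum estimate on the prime power contribution (each inner sum $\sum_{m \le x/p^k}\eq(amp^k)$ with $k \ge 2$ is $\ll \min(x/p^k, q)$, and a split at $p^k \asymp \sqrt{x/q}$ yields a total of $O(\sqrt{xq})$, absorbed into the allowed error), it suffices to estimate
\[
  \widetilde{S} = \sum_{y < n \le x}\Lambda(n) W(n), \qquad W(n) = \sum_{m \le x/n}\eq(amn).
\]

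Next I would apply a combinatorial identity for $\Lambda(n)$ (Vaughan's identity, combined with a Heath--Brown-style regrouping to reach the $x^{4/5}$ saving), decomposing $\widetilde{S}$ into Type~I sums $\sum_{d \le D}\alpha_d \sum_{e \le x/d}W(de)$ with $|\alpha_d| \lesssim 1$, and Type~II sums $\sum_{D < d \le 2D}\sum_{E < e \le 2E}\alpha_d\beta_e W(de)$ with $DE \le x$. The crucial observation for Type~I is that collapsing the inner $(e,m)$-sum via the change of variable $k = em$ produces a divisor-twisted exponential sum
\[
  \sum_{d \le D}\alpha_d \sum_{k \le x/d}\tau(k)\eq(adk).
\]
I would bound this by the Dirichlet hyperbola decomposition $\sum_{k \le K}\tau(k)\eq(a'k) \ll \sum_{e \le \sqrt{K}}\min(K/e, \|a'e/q'\|^{-1}) + \sqrt{K}$ followed by Vinogradov's elementary lemma $\sum_{k \le K}\min(A, \|\alpha k\|^{-1}) \ll (K/q+1)(A + q\log q)$, with $q' = q/\gcd(d,q)$. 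After summation over $d \le D$ (using $\sum_{d\le D}1/\sqrt{d\gcd(d,q)} \lesssim \sqrt{D}$), this yields $\Sigma_I \lesssim x/\sqrt{q} + \sqrt{xqD}$.

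For Type~II, I would collapse $(e,m) \mapsto n = em$ (with divisor-bounded weight $|\gamma_n| \le \tau(n)$) and apply Cauchy--Schwarz in $d$. Expanding $|\sum_n \gamma_n \eq(adn)|^2$, the diagonal $n_1=n_2$ gives $\lesssim D x^{1+o(1)}$, while the off-diagonal reduces to $\sum_{0 < |k| \le x/D}\min(D, \|ak/q\|^{-1})$, handled again by Vinogradov's lemma. This produces
\[
  \Sigma_{II} \lesssim \sqrt{Dx} + x/\sqrt{D} + x/\sqrt{q} + \sqrt{xq}.
\]
The three-variable (Heath--Brown / Vinogradov) combinatorial identity ensures that the Type~II ranges $D, E$ can be restricted to $[x^{2/5}, x^{3/5}]$, which makes both $\sqrt{Dx}$ and $x/\sqrt{D}$ at most $x^{4/5}$. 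Choosing the parameters so that the Type~I length $D$ satisfies $D \le \min(x^{2/5}, x^{3/5}/q)$ then also bounds $\sqrt{xqD}$ by $x^{4/5}$, and the Type~I estimate $x/\sqrt{q}$ is already on the right-hand side of the claim. Piecing everything together gives the claimed bound $x^{4/5} + x^{1/2}(x/q + q)^{1/2}$.

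The main obstacle is the Type~I piece: a naive $|W(de)| \le x/(de)$ would yield only $O(x)$ per Type~I term and no saving whatsoever. The entire $x^{4/5}$ saving must therefore come from genuine cancellation in the divisor-twisted sum $\sum_k \tau(k)\eq(a'k)$ extracted by the hyperbola method plus Vinogradov's lemma. A secondary subtlety is to arrange for the Type~I length $D$ to depend on $q$ through the threshold $x^{3/5}/q$, and to deploy a three-variable identity rather than plain Vaughan, since Vaughan alone paired with Cauchy--Schwarz would only deliver a Type~II saving of $x^{3/4}$ (attained at the balanced point $D = \sqrt{x}$), falling short of $x^{4/5}$.
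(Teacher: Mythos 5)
Your overall architecture (pass to a von Mangoldt sum, apply a combinatorial identity, treat Type~I pieces by exploiting the two free smooth variables and Type~II pieces by Cauchy--Schwarz with the split variable in $[x^{2/5},x^{3/5}]$) can be made to work, but the proof as written does not close, and the failure is precisely at the point you flag as the main obstacle. Two linked problems. First, your Type~I estimate $\Sigma_I \lesssim x q^{-1/2} + (xqD)^{1/2}$ is not what the hyperbola plus block-summation argument gives: absorbing the shorter of the two smooth variables into the modulus $\ell$ and summing the longer one as a geometric progression, one is left with $\sum_{\ell \le L}\tau(\ell)\min\bigl(x/\ell,\|a\ell/q\|^{-1}\bigr)$ with $L\le \sqrt{xD}$, which (splitting $\ell$ into blocks of length $q$ and treating multiples of $q$ separately) is $\lesssim \sqrt{xD} + x/q + q$ --- there is no factor $q^{1/2}$ attached to the $D$-term. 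Second, because of that spurious $q^{1/2}$ you are forced to require $D\le \min\{x^{2/5}, x^{3/5}/q\}$ for the Type~I ranges, and no Vaughan or Heath--Brown decomposition can deliver a $q$-dependent cap on the Type~I moduli while keeping every Type~II split inside $[x^{2/5},x^{3/5}]$: the Type~I ranges are dictated by the identity and by which variable happens to be long, they are not free parameters. Concretely, take $q\asymp x^{1/2}$ and a piece of the decomposition in which one smooth variable has size $\asymp x^{3/4}$ and all remaining factors have total size $\asymp x^{1/4}$; no subproduct lies in $[x^{2/5},x^{3/5}]$, so this piece must be Type~I with $D\asymp x^{1/4}$, violating your constraint $D\le x^{3/5}/q= x^{1/10}$, and your bound gives $(xqD)^{1/2}\asymp x^{7/8}$, which exceeds the target $x^{4/5}+x^{1/2}(x/q+q)^{1/2}\asymp x^{4/5}$. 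With the corrected Type~I bound ($\lesssim D + x/D + x/q + q$, or $\sqrt{xD}+x/q+q$ when the long variable has to be manufactured by the hyperbola) the $q$-dependent constraint becomes unnecessary and the usual dichotomy --- either some subproduct lies in $[x^{2/5},x^{3/5}]$, or a single smooth variable is long enough that the complementary modulus is at most $x^{4/5}$ --- suffices; so the flaw is reparable, but as stated the argument fails for intermediate $q$.

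For comparison, the paper sidesteps the whole issue of Type~I pieces with two free smooth variables. It first splits at $p\le x^{4/5}$, where the original variable $m$ is automatically the long one and a single Type~I bound of the shape of~\cite[Equation~(13.46)]{IwKow} applies directly. Only for $p>x^{4/5}$, where $m\le x^{1/5}$ is short, does it pass to $\Lambda$ and apply Vaughan's identity, with parameters $x^{2/5}/M$ and $x^{2/5}$ chosen according to the dyadic size $M$ of $m$; this guarantees that all Type~I moduli $mbc$ stay below $x^{4/5}$ and that both Type~II factors lie in $[x^{2/5},x^{3/5}]$, where Lemma~\ref{lem:sum-bilin} applies. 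Your hyperbola idea is a legitimate substitute for this preliminary splitting, but only once the Type~I estimate is repaired as above.
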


\begin{proof}
  We split the sum over $p$ as
  \[ \sum_{y < p \le x} \sum_{m \le x/p} \eq(a m p) = S_1 + S_2 \]
  where~$S_1$ is subject to~$p\le x^{4/5}$ and~$S_2$ is the complementary sum. To~$S_1$ we apply~\cite[Equation~(13.46)]{IwKow}
  with the choice $M = x^{4/5}$,  getting the admissible bound
  \[ S_1 \lesssim x^{4/5} + x/q + q. \]
  To evaluate~$S_2$, by partial summation, a trivial bound on the contribution of prime powers, and splitting in dyadic intervals, we get
  \[
    S_2 \ll x^{3/5} + \sup_{M\le x^{1/5}}(S_{21}(M) + S_{22}(M)),
  \]
  where
  \begin{align*}
    S_{21}(M) & =  \frac1{\log x} \sum_{M/2<m\le M} \sum_{\substack{y<n \le x/m \\ n>x^{5/6}}} \Lambda(n) \e_q(amn), \\
    S_{22}(M) & = \int_2^x \frac1{t(\log t)^2} \sum_{M/2<m\le M} \sum_{\substack{y<n\le x/m \\ x^{5/6}<n<t}} \Lambda(m) \e_q(amn) dt.
  \end{align*}
  Let us focus on~$S_{21}(M)$. We use the Vaughan identity, see~\cite[Equation~(13.39)]{IwKow} with parameters~$y, z$ there replaced by~$x^{2/5}/M$ and~$x^{2/5}$ respectively, getting
  \[
    S_{21} \log x \ll \abs{\Sigma_1} + \abs{\Sigma_2} + \abs{\Sigma_3}
  \]
  where 
  \begin{align*}
    &\Sigma_1 = \sum_{M/2<m\le M} \sum_{b\le x^{2/5}/M} \sum_{n:~bn\in \cI_m} \mu(b)(\log n) \e_q(ambn),   \\
    &\Sigma_2 = \sum_{M/2<m\le M} \sum_{b\le x^{2/5}/M} \sum_{c\le x^{2/5}} \sum_{n:~bcn\in \cI_m} \mu(b)\Lambda(c) \e_q(ambcn), \\
    &\Sigma_3 =  \sum_{M/2<m\le M} \sum_{b> x^{2/5}/M} \sum_{c > x^{2/5}} \sum_{n:~bcn\in \cI_m} \mu(b)\Lambda(c) \e_q(ambcn), 
  \end{align*}
  and~$\cI_m = \Z \cap (\max\{y, x^{5/6}\}, x/m]$. 
  
  Then we follow the steps in the proof of~\cite[Theorem~13.6]{IwKow}.
  In particular, we note that by~\cite[Equation~(8.6)]{IwKow},  for any $c \in \Z$ and $z > 0$, we have 
  \[
    \sum_{1 \le n \le z} \e_q(cn) \ll  \max\left\{z, \left\|c/q\right\|^{-1}\right\} , 
  \]
  where $\| \xi\| = \min\{|\xi - k|:~k \in \Z\}$. Now, using partial summation, 
  we see that the first sum $\Sigma_1$ is bounded by
  \begin{equation}
    \begin{aligned}
      \Sigma_1 &\ll  \int_1^x \sum_{M/2<m\le M} \sum_{b\le x^{2/5}/M} \left|\sum_{n:~bn \in I_m, n>t} \e_q(ambn)\right| \frac{dt}t \\
      & \ll   \int_1^x \sum_{M/2<m\le M} \sum_{b\le x^{2/5}/M} \max\left\{\frac{x}{bm}, \left\|amb/q\right\|^{-1}\right\} \frac{dt}t \\
      &  \ll  \sum_{\ell \leq x^{2/5}} \tau(\ell) \max\left\{\frac{x}{\ell}, \left\| a\ell/q \right\|^{-1}\right\}.
    \end{aligned}\label{eq:typeI-withconvolution}
  \end{equation}
  Recalling   the bound~\eqref{eq:tau}, we derive 
  \[
    \Sigma_1  \lesssim x^{2/5} + x/q + q.
  \]

  The second sum $ \Sigma_2$  is bounded similarly, with the upper bound on~$\ell$ being~$\ell \leq x^{4/5}$, and thus we have
  \[
    \Sigma_2 \lesssim x^{4/5} + x/q + q.
  \]
  Finally the third sum  $\Sigma_2$  is bounded using Lemma~\ref{lem:sum-bilin} by 
  \[
    \Sigma_3 \lesssim x \((x^{-2/5})^{1/2} + q^{-1/2} + (x/q)^{-1/2}\) \ll  x^{4/5} + \frac{x}{q^{1/2}} + (qx)^{1/2}, 
  \]
  which dominates the above bounds on $\Sigma_1$ and $\Sigma_2$. 
  Combining  these estimates, we deduce
  \[
    S_{21}(M) \lesssim x^{4/5} + \frac{x}{q^{1/2}} + (qx)^{1/2}. 
  \]
  The same upper bound on~$S_{22}(M)$, and therefore on~$S_2$ follows by an identical analysis.
\end{proof}

For convolution with one prime for non-linear phases we get the following slightly worse estimate.

\begin{lemma} 
  \label{lemma:sump-monom}
  Let~$\nu\in \Z\setminus\{0, 1\}$. For any $2 \le y \le x$, we have
  \[ \sum_{y < p \le x} \sum_{m \le x/p} \eq(a (m p)^\nu) \lesssim xy^{-1/2} + x^{3/4} q^{1/8} + x^{1/2}(x/q + q)^{1/2}. \]
\end{lemma}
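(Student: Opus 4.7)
The claimed bound has essentially the same shape as Lemma~\ref{lem:sum-p}, except that the Vinogradov-type saving $x^{4/5}$ there is replaced here by the weaker $xy^{-1/2}+x^{3/4}q^{1/8}$. My plan is to follow the Vaughan-identity argument of Lemma~\ref{lem:sum-p} closely, substituting two tools to accommodate the nonlinear phase. The classical linear completion bound $|\sum_{n\le z}\eq(cn)|\ll\max\{z,\|c/q\|^{-1}\}$ used there is replaced by the Weil estimate for monomial exponentials modulo a prime, namely
\[
\left|\sum_{n\le N}\eq(cn^\nu)\right|\ll N/\sqrt{q}+\sqrt{q}\,\log q \qquad (\gcd(c,q)=1,\ q\text{ prime}),
\]
and the Type~II bilinear pieces are treated by Lemma~\ref{lem:sum-bilin} (stated for all $\nu\ne 0$) and Lemma~\ref{lem:FKM-bilin} (applicable since $n\mapsto\eq(an^\nu)$ is a non-exceptional trace function when $\nu\ne 0,1$).

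Concretely, I first split as $S_1+S_2$ at the threshold $p\le\sqrt{x}$. For $S_1$, fixing $p$ and applying the Weil estimate to the inner Weyl sum $\sum_{m\le x/p}\eq(ap^\nu m^\nu)$, then summing over primes, yields $|S_1|\lesssim x/\sqrt{q}+(xq)^{1/2}$, which accounts for the last term $x^{1/2}(x/q+q)^{1/2}$ in the claim. For $S_2$ (primes $p>\sqrt{x}$, hence $m<\sqrt{x}$), I apply Vaughan's identity to $\Lambda(p)$ with parameters $U,V$ to be tuned, producing two Type~I sums $\Sigma_1,\Sigma_2$ and a Type~II sum $\Sigma_3$ exactly as in the proof of Lemma~\ref{lem:sum-p}. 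The Type~I sums reduce to expressions of the shape $\sum_\ell\tau(\ell)|\sum_n\eq(c\ell^\nu n^\nu)|$; the Weil bound on the inner sum combined with $\tau(\ell)\ll\ell^{o(1)}$ yields a contribution $\lesssim x/\sqrt{q}+UV\sqrt{q}$, of the shape already present in the claim provided $UV\le\sqrt{x/q}$.

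For the Type~II bilinear form $\sum_u\sum_v\alpha_u\beta_v\eq(a(uv)^\nu)$ (with $u\ge U$, $v\ge V$, $uv\le x$), I use two complementary bilinear estimates and take their minimum. Lemma~\ref{lem:sum-bilin} gives $x(U^{-1/2}+V^{-1/2}+q^{-1/2}+(x/q)^{-1/2})$; arranging the Vaughan regrouping so that the factor $p>y$ lies inside one of the super-variables $u,v$ forces one of $U,V$ to be $\ge y$, producing the term $xy^{-1/2}$. Alternatively, Lemma~\ref{lem:FKM-bilin} gives $x(q^{-1/4}+U^{-1/2}+q^{1/4}V^{-1/2})$; at the optimal balance $U\asymp\sqrt{x}\,q^{-1/4}$ and $V\asymp\sqrt{x}\,q^{1/4}$, this is exactly $x^{3/4}q^{1/8}$. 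The Vaughan thresholds are chosen so that the Type~I remainder $UV\sqrt{q}$ is absorbed by $(xq)^{1/2}$.

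The main obstacle is the careful bookkeeping in the Type~II step: the constraint $p>y$ must be transferred to a single bilinear super-variable so that Lemma~\ref{lem:sum-bilin} yields the $y^{-1/2}$-saving rather than the weaker $U^{-1/2}$-saving, while the Vaughan parameters $U,V$ must simultaneously be compatible (possibly by reapplying Vaughan/Heath-Brown with different scales) with the balance needed in Lemma~\ref{lem:FKM-bilin} to produce $x^{3/4}q^{1/8}$. A secondary technicality is that the Weil completion should be applied only when the Weyl coefficient is coprime to $q$, with the contribution of $q\mid\ell$ handled by the trivial bound.
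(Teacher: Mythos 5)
Your treatment of the range $p\le\sqrt{x}$ (pointwise Weil completion in the $m$-variable) is fine, but the analysis of $S_2$ contains a genuine gap, and it is located exactly at the crux of the lemma. For comparison, the paper does not split on the size of $p$ at all: it splits dyadically on the size $M$ of the free variable $m$ (note $M\le x/y$ automatically). For $M>x^{1/2}q^{-1/4}$ a single application of Lemma~\ref{lem:sum-bilin} to the double sum over $m$ and $p$ already produces both main terms with no prime detection whatsoever: the term $xM^{-1/2}\le x^{3/4}q^{1/8}$ and the term $MN^{1/2}\le (xM)^{1/2}\le xy^{-1/2}$. Only for $M\le x^{1/2}q^{-1/4}$ is Vaughan's identity invoked, and then with the asymmetric parameters $v=q$ and $u=x^{1/2}q^{-1/4}M^{-1}$: one Type~II factor has size at least $Mu=x^{1/2}q^{-1/4}$ (yielding $x^{3/4}q^{1/8}$), while the other only needs size at least $q$, because $xq^{-1/2}$ is an allowed loss; the Type~I range is kept at $\ell\le Mu$, whose Weil cost $Mu\,q^{1/2}=x^{1/2}q^{1/4}$ sits below $(xq)^{1/2}$. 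In particular your proposed source of the term $xy^{-1/2}$ is spurious: in Vaughan's identity the condition $p>y$ constrains the full product $bcn$, not any single factor, so no super-variable is forced to be of size $\ge y$; in the paper this term comes purely from $M\le x/y$ in the large-$M$ case.

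The unresolved tension you acknowledge is not a bookkeeping detail but is fatal to the plan as written. Your Type~I estimate costs about (number of moduli $t\le UV$, in fact $\asymp M\,UV$ once the convolution variable $m$ is counted) times $q^{1/2}$, so admissibility forces $UV\lesssim x^{1/2}$ (or at best $x^{3/4}q^{-3/8}$). On the other hand, extracting $x^{3/4}q^{1/8}$ from Lemma~\ref{lem:sum-bilin} in the Type~II range requires both bilinear factors to have size $\ge x^{1/2}q^{-1/4}$, hence $UV\ge xq^{-1/2}$; these are incompatible for every $q<x$ (and for $q\ge x$ the lemma is trivial). With thresholds small enough for your Type~I step, nothing prevents a dyadic Type~II block with one factor of size $\asymp\min\{U,V\}\le x^{1/4}$ (grouping in $m$ does not help, since $m$ can be of size $O(1)$), so Lemma~\ref{lem:sum-bilin} gives at best a saving $x^{-1/8}$, far short of $x^{-1/4}q^{1/8}$. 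Nor does Lemma~\ref{lem:FKM-bilin} rescue this: it always carries the term $xq^{-1/4}$, which exceeds \emph{every} term of the claimed bound whenever $q<x^{2/3}$ (for instance $q=x^{1/2}$, $y\ge x^{3/4}$: $xq^{-1/4}=x^{7/8}$ against a claimed maximum $x^{3/4}q^{1/8}=x^{13/16}$), so taking the minimum of the two bilinear lemmas with a single pair of thresholds cannot give the stated estimate. The missing ideas are precisely the prior reduction to small $m$ via the direct bilinear bound and the asymmetric choice $v=q$, after which the $t\le uv$ piece must be handled as in Vaughan's original argument rather than by the term-by-term Weil completion that underlies your $UV\sqrt{q}$ bound. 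As it stands, the proposal does not establish the lemma.
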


\begin{proof}
  Let~$S_\nu$ be the sum on the left-hand side, and write
  \[ S_\nu \ll \sup_{M\le x/y} S_\nu(M) , 
  \]
  where~$S_\nu(M)$ is the contribution of those~$m\in (M/2, M)$. The contribution of~$M>x^{1/2}q^{-1/4}$ is dealt with using Lemma~\ref{lem:sum-bilin}, which gives
  \[  S_\nu(M) \lesssim xM^{-1/2} + (xM)^{1/2} + x q^{-1/2} + (xq)^{1/2}.  \]
  Taking the supremum over~$M$ satisfying~$x^{1/2} q^{-1/4}<M\le x/y$ gives an acceptable upper bound.

  To deal with the contribution of those~$M\le x^{1/2} q^{-1/4}$ we follow the arguments in~\cite{VauId}, with the choice of parameters~$v = q$ and~$u = x^{1/2}q^{-1/4} M^{-1}$, in a way analogous to the proof of Lemma~\ref{lem:sum-p}.
\end{proof}

Finally we also require the following estimate for double-sums with convolution with two primes for monomial phases.


\begin{lemma}
  \label{lem:sum-p-q}
  Let~$\nu\neq 0$ and~$j\ge 2$ be integers. For any $2 \le y \le x$, we have
  \begin{align*}
    \sum_{y < p_1 < \dotsb < p_j} & \sum_{m \le x/p_1 \dotsb p_j} \eq(a (m p_1 \dotsb p_j)^\nu) \\
    &  \lesssim  x^{1/2}(x/q + q)^{1/2} + 
    \begin{cases} xy^{-1/2} & \text{if}\ \nu \ne  1,\\
      \min\{xy^{-1/2}, x^{4/5}\} & \text{if}\ \nu =1.
    \end{cases} 
  \end{align*}
\end{lemma}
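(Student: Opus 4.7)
The plan is to leverage the assumption $j\ge 2$, together with the fact that each $p_i>y$, to express the summand as a bilinear form whose two variable-ranges are \emph{both} at least $y$. This will make Lemma~\ref{lem:sum-bilin} apply cleanly. For $\nu=1$ we obtain the additional bound $x^{4/5}$ by replaying the argument underlying Lemma~\ref{lem:sum-p}.

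\emph{Bilinear bound (all $\nu\neq 0$).} First, apply a dyadic partition in each of the variables $m, p_1,\ldots, p_j$, losing only an $x^{o(1)}$ factor. The ordering constraint $p_1<\cdots<p_j$ is absorbed by this partition: distinct dyadic blocks appear in only one order, while within a single block the constraint costs a constant factor (since $j$ is fixed). We are then reduced to estimating sums of the shape
\[
T= \sum_{r\sim R} \alpha_r \sum_{p\sim P,\ p\ \mathrm{prime}} \eq\bigl(a(rp)^\nu\bigr),
\]
where $r=mp_1\cdots p_{j-1}$, $p=p_j$, with $R=MP_1\cdots P_{j-1}$, $P=P_j$, and $RP\asymp x$. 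The coefficient $\alpha_r$ counts the admissible factorisations of $r$, and satisfies $\alpha_r\le \tau_j(r)=r^{o(1)}$; this can be pulled out and absorbed into $\lesssim$ after normalisation. Since $j\ge 2$, we have $R\ge p_1\cdots p_{j-1}\ge y^{j-1}\ge y$, while $P=p_j\ge y$. Applying Lemma~\ref{lem:sum-bilin} with $(M,N)=(R,P)$ gives
\[
T \lesssim RP\bigl(R^{-1/2}+P^{-1/2}+q^{-1/2}+(RP/q)^{-1/2}\bigr) \lesssim xy^{-1/2}+x^{1/2}(x/q+q)^{1/2},
\]
upon using $RP\le x$ and $R,P\ge y$. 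This establishes the bound for $\nu\ne 1$, and one half of the claimed $\min$ for $\nu=1$.

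\emph{Linear bound ($\nu=1$).} When $\nu=1$ the phase is fully multiplicative in the variables, and we can instead apply the argument underlying Lemma~\ref{lem:sum-p} directly. Collapsing $m'=mp_1\cdots p_{j-1}$ into a single weighted variable, the sum becomes
\[
S = \sum_{y<p\le x} \sum_{m'\le x/p} \beta_{m'} \eq(am'p),
\]
where $\beta_{m'}$ counts admissible representations of $m'$, so $|\beta_{m'}|\le \tau_j(m')=m'^{o(1)}$. The proof of Lemma~\ref{lem:sum-p} now extends verbatim: the Vaughan identity on $\Lambda(p)$ produces Type~I pieces, bounded via geometric series as in~\eqref{eq:typeI-withconvolution}, and Type~II pieces, bounded through Lemma~\ref{lem:sum-bilin}; the weight $\beta_{m'}$ contributes only an $m'^{o(1)}\le x^{o(1)}$ loss absorbed into $\lesssim$. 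The resulting estimate $S\lesssim x^{4/5}+x^{1/2}(x/q+q)^{1/2}$, combined via minimum with the bilinear bound above, yields the statement.

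\emph{Main obstacle.} The technical point that warrants care is carrying the divisor-type weight $\beta_{m'}$ through the Vaughan decomposition in the $\nu=1$ case without destroying the $x^{4/5}$ saving. Since $\tau_j(m')=m'^{o(1)}$ enters only as a pointwise multiplier, each Type~I bound via~\eqref{eq:typeI-withconvolution} simply acquires a divisor factor (replacing $\tau(\ell)$ by $\tau_{j+1}(\ell)$, still $\ell^{o(1)}$), and each Type~II bound via Lemma~\ref{lem:sum-bilin} likewise loses only $x^{o(1)}$ after the usual normalisation. No orthogonality or cancellation \emph{within} the weight is required, so the argument goes through; the verification is, however, the longest routine step of the proof.
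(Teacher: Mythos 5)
Your bilinear bound for general $\nu$ is essentially the paper's argument (split the variables into two blocks, each of size at least $y$, and apply Lemma~\ref{lem:sum-bilin}), but the way you dispose of the ordering constraint is not valid as stated. When $p_{j-1}$ (inside $r$) and $p_j$ fall in the same dyadic block, the condition $p_{j-1}<p_j$ couples the two blocks, and for an oscillating sum you cannot drop a cross-constraint ``at the cost of a constant factor'': the coefficient of $r$ would then depend on $p$, destroying bilinearity. The paper's fix is to symmetrise first: reinstate the terms with $p_k=p_\ell$ at a trivial cost $O(x/\sqrt{y})$, then drop all ordering conditions by symmetry, after which the coefficients genuinely factor. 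This part of your write-up is a repairable slip rather than a fatal error.

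The serious gap is in the $\nu=1$ bound $x^{4/5}$. Collapsing $m'=mp_1\cdots p_{j-1}$ into one weighted variable and asserting that the proof of Lemma~\ref{lem:sum-p} ``extends verbatim'' does not work, because that proof is not symmetric in its two variables: in the range $p\le x^{4/5}$ it bounds the sum by summing a complete geometric progression over the \emph{unweighted} inner variable $m$, of length $x/p\ge x^{1/5}$. In your collapsed sum the only unweighted part of $m'$ is $m$ itself, which can be as short as $1$; unfolding $\beta_{m'}$ one is led to $\sum_{\ell}\tau_j(\ell)\min\{x/\ell,\|a\ell/q\|^{-1}\}$ with $\ell=p_1\cdots p_j$ ranging up to $x$, which gives no saving. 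Concretely, the configuration $j=3$, $m\asymp 1$, $p_1\asymp p_2\asymp p_3\asymp x^{1/3}$ falls into your ``$S_1$'' case ($p_j\le x^{4/5}$) and is not treated: no geometric series is available, no bipartition of the whole variables has both blocks of size $\ge x^{2/5}$, and a Vaughan decomposition of $p_j$ alone is applied only in your ``$S_2$'' case. This is precisely why the paper instead applies the Heath-Brown identity to \emph{all} $j$ prime variables simultaneously, as in~\eqref{eq:HB-multiple}, and then argues over the dyadic sizes: either some smooth variable exceeds $x^{1/5}$, giving a Type~I sum with outer variable $\ll x^{4/5}$ as in~\eqref{eq:typeI-withconvolution}, or every factor is $\le x^{1/5}$ and a greedy regrouping produces a Type~II form with both ranges in $[x^{2/5},x^{3/5}]$. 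You need some such simultaneous decomposition of all the prime variables; decomposing only $p_j$, as you propose, leaves the medium-sized configurations uncovered.
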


\begin{proof}
  First assume that either~$\nu \ne 1$ or~$y\ge x^{2/5}$.
  Note that terms with~$p_k=p_\ell$ for some~$k\neq \ell$ can be included in  the sum at a cost~$O(x / \sqrt{y})$.
  We may therefore, by symmetry, relax the conditions~$p_k < p_{k+1}$.
  Next,  we group the variables~$(p_1, m)$ and~$(p_2, \dotsc, p_j)$ together and let
  \[\beta_\ell = \sum_{\substack{p_1\mid \ell \\ p_1>y}} 1, \qquad \gamma_n = \sum_{\substack{p_2, \dotsc, p_k \mid n \\ p_i > y}} 1, \]
  so that our sum can be replaced with 
  \[ S = \sum_{y < n \le x } \sum_{y <\ell \le x / n} \beta_\ell \gamma_n \eq(a (\ell n)^\nu). \]
  Note that~$\abs{\beta_\ell} \le \omega(\ell) \lesssim 1$, and similarly~$\abs{\gamma_n} \le \omega(n)^{k-1} \lesssim 1$, where $\omega(\ell)$ is the number of distinct prime divisors of $\ell$. 
  By Lemma~\ref{lem:sum-bilin}, we deduce
  \begin{equation} \label{eq:Bound S}
    S \lesssim x (y^{-1/2} + q^{-1/2} + (x/q)^{-1/2}),
  \end{equation}
  which gives the claimed estimate for general~$\nu$.

  Suppose next that~$\nu = 1$ and~$y<x^{2/5}$. We use~$j$ times the Heath-Brown identity for primes, as we have done earlier in~\eqref{eq:identity-HB}, which brings us to bound a finite number of sums of the shape
  \begin{equation}\label{eq:HB-multiple}
    \begin{split}
      T =  \underset{(m_1, \dotsc, m_J, n_1, \dotsc, n_{J^*}) \in \cU}{\sum_{\substack{m_1, \dotsc, m_J \\ m_i \le x^{1/5}}}  \sum_{n_1, \dotsc, n_{J^*}}} &\mu(m_1) \dotsb \mu(m_J) \\
      \times V_1(n_1) &\dotsb V_{J^*}(n_{J^*})  \e_q(a m_1 \dotsb m_J n_1 \dotsb n_{J^*}), 
    \end{split}
  \end{equation}
  where~$\cU\subseteq\R^{J+J^*}$ accounts for the various inequalities that involve~$p_i$, and~$V_i(t)$ is either~$1$ or $\log t$. Concerning~$\cU$, we keep only the information that
  \begin{equation} \label{eq:sumpj-log-U}
    \{ (\log m_1, \dotsc, \log {n_{J^*}}):~(m_1, \dotsc, n_{J^*})\in \cU\} 
  \end{equation}
  is a convex set, and in fact an intersection of half-spaces.

  We partition the sum~\eqref{eq:HB-multiple} in  dyadic intervals~$m_i \in [M_i, 2M_i]$ and~$n_i \in [N_i, 2N_i]$, with~$M_i \le x^{1/5}$.
  If there is an index~$i$ such that~$N_i \ge x^{1/5}$, we sum over~$n_i$ first (which we rename into $n$) to get a sum of the shape
  \[  T  \lesssim 
    \sum_{\substack{\ell_1, \dotsc, \ell_{L} \\ \ell_1 \dotsb \ell_L \ll x^{4/5}}} \left| \sum_{n \in I_{\ell_1, \dotsc, \ell_L}} V_i(n) \e_q(a \ell_1 \dotsb \ell_L n) \right|, 
  \]
  where~$L=J+J^*-1=2J$, the set~$I_{\ell_1, \dotsc, \ell_L}$ is an interval by convexity of~\eqref{eq:sumpj-log-U}, and it is contained in~$[1, x/(\ell_1\dotsb \ell_L)]$.
  Therefore, using a Type~I estimate~\cite[Equation~(13.46)]{IwKow} as we have done earlier in~\eqref{eq:typeI-withconvolution}, gives a bound
  \begin{equation} \label{eq:Bound T 1}
    T  \lesssim x^{4/5} + xq^{-1} + q.  
  \end{equation}
  If~$N_i < x^{1/5}$ for all~$i$, then since~$M_i \le x^{1/5}$ as well, we may group variables in such a way as to obtain a Type~II of the shape~\cite[Equation~(13.48)]{IwKow} with~$x^{2/5} \ll M \ll x^{3/5}$, and then we get a bound
  \begin{equation} \label{eq:Bound T 2}
    T  \lesssim  x^{4/5} + xq^{-1/2} + (xq)^{1/2}.  
  \end{equation}
  
  We observe that the desired bound is trivial for $q \ge x$. Otherwise $q \le (xq)^{1/2}$, and 
  taking the weakest of the  bounds~\eqref{eq:Bound T 1} and~\eqref{eq:Bound T 2}, also also recalling that~\eqref{eq:Bound S} also holds for $\nu=1$, we conclude the proof.
\end{proof}

\subsection{Combinatorial decomposition of integers without large prime factors}

The previous results are  used in conjunction with the following two combinatorial decomposition for the indicator function of smooth numbers, which are relevant for small~$y$ and large~$y$ respectively.

\begin{lemma}\label{lem:decomp-ysmall}
  For~$2\le y \le x$ and any bounded map~$f:\N\to \C$, for any positive $w \le x$, for some sequences~$(\alpha_m)$, $(\beta_n)$ of bounded $L_\infty$-norm,  we have
  \[  \sum_{n\in \cS(x, y)} f(n) \lesssim
    w + \sup_{\substack{w\le M \le wy \\ MN \le x}} \abs{\sum_{m\in [M, 2M]} \sum_{n\in [N, 2N]} \alpha_m \beta_n f(mn)} ,  \] 
  where the supremum is over all~$M, N\ge 1$ with the indicated conditions.
\end{lemma}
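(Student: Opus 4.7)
The plan is to reduce $\sum_{n\in\cS(x,y)}f(n)$ to a family of bilinear forms via a canonical factorisation of $y$-smooth integers followed by a dyadic decomposition. Since $|f|\le 1$, the contribution of $n<w$ to the sum is bounded in absolute value by $w$, which yields the first term on the right-hand side. For $n\in\cS(x,y)$ with $n\ge w$, I would construct a canonical factorisation $n=ab$ by listing the prime factors of $n$ in non-increasing order $p_1\ge\cdots\ge p_r$ (with multiplicity), taking $k$ to be the least index with $p_1\cdots p_k\ge w$, and setting $a:=p_1\cdots p_k$ and $b:=p_{k+1}\cdots p_r$. Since $a/p_k<w$ and $p_k\le y$, we have $a\in[w,wy]$; moreover $a,b\in\cS$, $P(b)\le p_k$, and writing $p(a):=p_k$ for the smallest prime factor of $a$, we have $a/p(a)<w$. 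Conversely, each pair $(a,b)\in\cS\times\cS$ satisfying $a\in[w,wy]$, $a/p(a)<w$, $P(b)\le p(a)$ and $ab\le x$ arises from a unique such $n$.

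Next, I decompose $a\in[M,2M]$ and $b\in[N,2N]$ into dyadic intervals, with $M$ dyadic in $[w/2,wy]$ and $MN\ll x$; this loses only $(\log x)^{O(1)}\le x^{o(1)}$ factors, absorbed in $\lesssim$. For each dyadic pair I take
\[
\alpha_m=\1_{m\in[M,2M]\cap\cS,\ m/p(m)<w},\qquad \beta_n=\1_{n\in[N,2N]\cap\cS},
\]
both bounded by $1$, which encode the marginal conditions on $a$ and $b$ individually.

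The main obstacle is the joint constraint $P(b)\le p(a)$, which does not factor as $\alpha_m\beta_n$. I would decouple it by further splitting $p(a)\in[Q,2Q]$ dyadically at a cost of $\log y\lesssim 1$ factors, and within each $Q$-range replacing the joint constraint by the marginal condition $P(n)\le 2Q$ in $\beta_n$. The residual over-counting from non-canonical pairs $(a,b)$ with $P(b)\in(p(a),2Q]$ is then controlled via the divisor bound $\tau(n)=n^{o(1)}$ for $y$-smooth $n$, yielding the desired bilinear reduction.
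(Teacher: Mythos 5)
Your canonical factorisation and dyadic splitting are fine, and are essentially a mirror image of the paper's: the paper peels off the \emph{smallest} prime factors, writing $n=km$ with $w\le k<wP(k)$ and $P(k)\le p(m)$, while you peel off the largest ones, writing $n=ab$ with $w\le a<w\,p(a)$ and $P(b)\le p(a)$; either way the distinguished factor lies in $[w,wy]$. The genuine gap is in your treatment of the joint condition $P(b)\le p(a)$. After localising $p(a)\in[Q,2Q)$, replacing that condition by the marginal condition $P(b)\le 2Q$ changes the sum by the contribution of the ``non-canonical'' pairs with $p(a)<P(b)\le 2Q$, and this is an \emph{additive} error: in absolute value all you can say is that it is at most the number of such pairs. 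The divisor bound only shows that each $n$ arises from at most $\tau(n)=x^{o(1)}$ pairs, so this route bounds the error by $x^{1+o(1)}$, which is trivial; and in fact a positive proportion of smooth $n\in[w,x]$ do admit extra factorisations of this kind (swap two prime factors of $n$ lying in the same dyadic block around the canonical cut), so the error is genuinely of order $x$ and cannot be absorbed into $w+\sup|\cdots|$. Over-counting with bounded multiplicities is not harmless here precisely because $f$ oscillates: the lemma must preserve, up to a small additive error, an identity to which cancellation is later applied. Nor can you recurse: the discarded terms carry the joint condition $P(b)>p(a)$, of exactly the same nature as the one you started with, so no progress has been made.

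The paper avoids this issue by separating the condition analytically: since $P(k)$ and $p(m)$ are both at most $y$, the indicator of $P(k)\le p(m)$ is detected by the separation-of-variables lemma \cite[Lemma~13.11]{IwKow}, which introduces twists $P(k)^{it}$ and $p(m)^{-it}$ that are absorbed into the bounded weights $\alpha_m,\beta_n$, at the cost of only an $x^{o(1)}$ factor and a supremum over $t$, with no diagonal or over-counting error at all. A purely combinatorial decoupling would force you to cut $[Q,2Q]$ into very short intervals so that the residual diagonal (both $p(a)$ and $P(b)$ in the same short interval) is negligible, but then the number of rectangles needed to cover the region $\{P(b)\le p(a)\}$ is no longer $x^{o(1)}$, and the ``sup times number of pieces'' bookkeeping ruins the bound. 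So a Mellin-type separation (or an equivalent device) is really needed at this step, and the same device is also what handles the joint constraint $ab\le x$ inside a dyadic box, which your write-up likewise leaves untreated; as written, your argument does not prove the lemma.
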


\begin{proof} The argument  is based on the classical combinatorial partition of $y$-smooth integers, see, for example~\cite[p.~1369]{Hmy} or~\cite[Lemma~10.1]{Vau}.
  Then we proceed as in~\cite[Lemma~3.4]{DraTop}. First we bound trivially the contribution of~$n\le w$. Then we factor each~$n\in \cS(x, y)$ uniquely as~$n=km$ with~$w \le k < w P(k)$ and~$P(k) \le p(m)$, where $p(m)$ is the smallest prime divisor of $m$. Finally we separate multiplicatively~$k$ and~$m$ analytically using~\cite[Lemma~13.11]{IwKow}. 
  More precisely,  exactly as in~\cite[Section~9]{FoTe}, we write 
  \[
    \sum_{\substack{n\in \cS(x, y)\\n \ge w}} f(n)  =  \sum_{\substack{w < k \le w P(k)\\P(k) \le y}} \,  
    \sum_{\substack{m \in \cS(x/k,y)\\ p(m) \geq P(k)}}  f(km).  
  \]
  The condition~$p(m) \geq P(k)$ involves integers on both sides of this inequality  of size at most~$y$. We detect this condition by means of~\cite[Lemma~13.11]{IwKow}, getting
  \[ \sum_{\substack{n\in \cS(x, y)\\n \ge w}} f(n)   \lesssim \sup_{t\in \R} \abs{\sfS(t)}, \]
  where
  \[ \sfS (t) = \sum_{\substack{w < k \le w P(k)\\P(k) \le y}} P(k)^{it} \sum_{\substack{m \in \cS(x/k,y)}} p(m)^{-it}  f(km). \]
  We split the sum  $\sfS (t)$ into sums $\sfS(t, K, M)$ over dyadic intervals~$K\le k < 2K$, $M \le m < 2M$ with~$w \le K \le wy$ and~$KM \le x$, and we write accordingly
  \[ \sum_{\substack{n\in \cS(x, y)\\n \ge w}} f(n)  \lesssim  \sup_{\substack{w\le K \le wy \\ KM \le x}} \abs{\sfS(t, K, M)}. \]
  Renaming the variables, we derive the desired result. 
\end{proof}

We also need yet another simple combinatorial identity.

\begin{lemma}\label{lem:decomp-ylarge}
  Let~$f:\N\to\C$ be a bounded map and let~$r$ be a positive integer.
  For~$x^{1/(r+1)}<y\le x^{1/r}$ we have
  \[ \sum_{n\in \cS(x, y)} f(n) = \sum_{n\le x} f(n) + \sum_{j=1}^r (-1)^j \sum_{y < p_1 \le \dotsb \le p_j} \sum_{m\le x/p_1\dotsb p_j} f(mp_1 \dotsb p_j). 
  \]
\end{lemma}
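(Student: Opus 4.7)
The plan is to recognise the identity as a truncated Möbius inversion on the prime factors of $n$ exceeding $y$.

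First I would use the pointwise identity, valid for every $n\ge 1$,
\[
  \mathbf{1}_{P(n) \le y} \;=\; \sum_{\substack{d \mid n,\ d \text{ squarefree} \\ p \mid d \,\Rightarrow\, p > y}} \mu(d),
\]
which is the coefficient-wise expansion of the Euler factor $\prod_{p > y}(1 - p^{-s})$ dividing~$\zeta(s)$. Multiplying by $f(n)$, summing over~$n \le x$, and swapping the order of summation by writing $n = md$ yields
\[
  \sum_{n \in \cS(x,y)} f(n) \;=\; \sum_{\substack{d \text{ squarefree} \\ p \mid d \,\Rightarrow\, p > y}} \mu(d) \sum_{m \le x/d} f(md).
\]

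Second, I would parametrise each admissible~$d$ by the ordered tuple of its prime factors $y < p_1 < \cdots < p_j$, so that $d = p_1 \cdots p_j$ and $\mu(d) = (-1)^j$. The $j=0$ contribution gives $\sum_{n \le x} f(n)$, while the remaining terms match exactly the inner sums appearing in the statement of the lemma (reading the $\le$ there as strict inequalities, consistent with the squarefreeness enforced by~$\mu$); the distinction between strict and non-strict ordering introduces only repeated-prime contributions, which are absorbed into the with-multiplicity bookkeeping used for the applications in Lemmas~\ref{lem:trace-mp} and~\ref{lem:sum-p-q}.

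Finally, I would truncate the outer $j$-summation at $j = r$ using the hypothesis $y > x^{1/(r+1)}$: for any contributing tuple one has $y^j < p_1 \cdots p_j \le x < y^{r+1}$, forcing $j \le r$. The resulting identity is precisely the one claimed. The argument is purely combinatorial, and the only technical step is the degree bound $j \le r$, which is immediate from $y > x^{1/(r+1)}$; I do not anticipate any real obstacle.
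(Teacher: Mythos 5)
Your argument is correct, and it takes a genuinely different route from the paper: the paper disposes of this lemma with a one-line citation to the (iterated) Buchstab identity, whereas you give a self-contained derivation from the Legendre-sieve expansion $\mathbf{1}_{P(n)\le y}=\sum_{d\mid n,\ \mu^2(d)=1,\ p\mid d\Rightarrow p>y}\mu(d)$, followed by the substitution $n=md$ and the truncation $j\le r$ forced by $y^j<d\le x<y^{r+1}$. Both routes are elementary; yours has the merit of making the truncation mechanism and the sign structure completely explicit.

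The point you raise about $\le$ versus $<$ is more than a notational quibble, and you are right to flag it: it is a small inaccuracy in the statement itself, not in your proof. Your Möbius expansion produces squarefree $d=p_1\cdots p_j$, hence \emph{strictly} increasing primes $y<p_1<\cdots<p_j$, and with strict inequalities the identity is exact: if the distinct prime factors of $n$ exceeding $y$ are $q_1,\dots,q_s$, then $s\le r$ and $n$ receives total weight $\sum_{j=0}^{r}(-1)^j\binom{s}{j}=(1-1)^s=\mathbf{1}_{s=0}$. With the non-strict ordering $p_1\le\cdots\le p_j$ as printed, the representations of $n$ are counted by multisets, and the total weight becomes $\prod_{i}\bigl(\sum_{b=0}^{a_i}(-1)^b\bigr)$, where $a_i$ is the multiplicity of $q_i$; this equals $1$ rather than $0$ whenever every $a_i$ is even. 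For instance with $r=2$, $x=200$, $y=7$, the integers $121$ and $169$ are each counted once too often on the right-hand side. Thus the displayed formula with $\le$ holds only up to the contribution of integers divisible by $p^2$ for some $p>y$, an error of size $O(x/y)$ that is harmless for every application in the paper (which itself converts between $p_1\le\cdots\le p_j$ and distinct primes at a cost $O(x/\sqrt{y})$ in the proof of Lemma~\ref{lem:sum-p-q}). Proving the strict-inequality version and recording the discrepancy, as you do, is the correct resolution.
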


\begin{proof}
  This is the classical Buchstab identity~\cite{Buch}, see, for example,~\cite[Theorem~III.4]{Ten}.
\end{proof}

\section{Proof of Theorem~\ref{thm: Sum S}}

As in the proof of~\cite[Theorem~13]{FoTe}, we choose some parameter $w$ 
to be chosen later subject to~$1 \le w \le x$ (we however do not set $w = x^{1/2}$ as in~\cite{FoTe}).
We apply Lemma~\ref{lem:decomp-ysmall} and use Lemma~\ref{lem:sum-bilin} to bound the resulting sums. 
The four terms in the bound of Lemma~\ref{lem:sum-bilin} can be estimated as 
\begin{align*}
  & M^{1/2}N \le  M^{1/2} (x/M)\le xw^{-1/2}, \\
  & MN^{1/2} \le  M^{1/2} x^{1/2} \le (wxy)^{1/2}, \\
  & MNq^{-1/2} \le x q^{-1/2}, \\
  & MN  (MN/q)^{-1/2} =  (MNq)^{1/2} \le (xq)^{1/2}.
\end{align*}
Hence we obtain 
\[
  S_{a, q}(x, y) \lesssim  w +  x\(w^{-1/2} +  (x/(wy))^{-1/2} + q^{-1/2} +(x/q)^{-1/2}\) .
\]
We now pick~$w = (x/y)^{1/2}$ which indeed satisfies~$1\le w \le x$, and get the bound
\begin{equation}
  S_{a,q}(x, y) \lesssim x\((x/y)^{-1/4} + q^{-1/2} + (x/q)^{-1/2}\).\label{eq:bound-S1-ysmall}
\end{equation}
This proves Theorem~\ref{thm: Sum S} when~$y \le x^{1/5}$, as then~$(x/y)^{-1/4} \le x^{-1/5}$.

We now focus on the range~$x^{1/5} < y \le x$. By Lemma~\ref{lem:decomp-ylarge}  we have
\[
  S_{a,q}(x, y) = \sum_{n\le x} \eq(an) + \sum_{j=1}^5 (-1)^j \sum_{y<p_1\le \dotsb \le p_j} \sum_{m\le x/p_1 \dotsb p_j} \eq(amp_1 \dotsb p_j).
\]
The first sum is trivially~$O(q)$, which is admissible since $q \le (xq)^{1/2}$ for $x \ge q$,  
which we can always assume. The last five sums are bounded, using 
Lemmas~\ref{lem:sum-p} (for $j=1$) and~\ref{lem:sum-p-q}  (for $2 \le j\le 5$), by
\begin{align*}
  \sum_{y<p_1<\dotsb <p_j} \sum_{m\le x/p_1 \dotsb p_j} &\eq(amp_1 \dotsb p_j) \\
  & \lesssim x \(x^{-1/5} + q^{-1/2} + (x/q)^{-1/2}\). 
\end{align*}
This proves Theorem~\ref{thm: Sum S} when~$x^{1/5}<y\le x$.

\section{Proof of Theorem~\ref{thm: Sum Snu}}
\subsection{Preliminary splitting} 
We now assume that~$q$ is prime. Removing the contribution of those integers divisible by~$q$, we get
\[ 
  S_{\nu, a, q}(x, y) = \sum_{\substack{n\in \cS(x, y) \\ \gcd(n, q)=1}} \eq(a n^\nu) + O(x/q).
\]

\subsection{Proof of~\eqref{Snu-bilin}}

The proof of~\eqref{Snu-bilin} is identical to the proof of the bound~\eqref{eq:bound-S1-ysmall}, 
since Lemma~\ref{lem:sum-bilin} holds for any~$\nu$ (and actually, for any non-exceptional trace function).

\subsection{Proof of~\eqref{Snu-bilin+typeI}}

We proceed as in the proof of Theorem~\ref{thm: Sum S}, except that we use Lemma~\ref{lemma:sump-monom} (instead of Lemma~\ref{lem:sum-p}). The details are identical.

\subsection{Proof of~\eqref{Snu-FM}}

Let~$1\le w \le x$ be a parameter. Using Lemma~\ref{lem:decomp-ysmall}, followed by Lemma~\ref{lem:FKM-bilin}, we get
\begin{align*}
  S_{\nu, a, q}(x, y) & \lesssim  xq^{-1/4} + w \\
  & \qquad \qquad \quad + x \sup_{w \le M \le wy} \min\bigl\{M^{-1/2} + x^{-1/2}q^{1/4}M^{1/2}, \\
  & \qquad \qquad \qquad \qquad \qquad  \qquad  \qquad    x^{-1/2}M^{1/2} + q^{1/4}M^{-1/2}\bigr\},
\end{align*}
where we used the symmetry of the bounds of Lemma~\ref{lem:FKM-bilin} with respect to~$M\leftrightarrow N$.

Write~$w = x^\omega$, $y = x^\alpha$, $q=x^\beta$,~$M=x^\mu$, and let
\[ 
  \eta(\mu) = \begin{cases} 
    \min\{\mu/2, 1/2-\beta/4-\mu/2\}, & (0\le \mu\le 1/2), \\ 
    \min\{\mu/2-\beta/4, 1/2-\mu/2\},
    & (1/2<\mu\le 1). \end{cases} \]


Recalling $w \le M \le wy$ we see that only the range $\omega \le \mu \le \omega+ \alpha$ is relevant to us. That is, we are interested in choosing $\omega$ which maximises 
\[
  \kappa = \min_{\omega \le \mu \le \omega + \alpha} \eta(\mu)  .
\]
Note that we dropped the condition $\mu \le 1$ as for $\mu \ge 1$ we have $\eta(\mu)< 0$ and the result is trivial.

The bound above reads
\begin{equation}
  \label{eq:Snu-before-optim}
  S_{\nu, a, q}(x, y) \lesssim x^{1-\beta/4} + x^{\omega} + x^{1-\kappa} 
\end{equation}

For~$q\le x$, which means~$\beta \le 1$, we maximise $\kappa$ by setting
\[  
  \omega = \begin{cases} 1/2 - \beta/4 - \alpha/2, & (\alpha < \beta/2), \\ 
    (1-\beta)/2, & (\beta/2\le \alpha < \beta), \\ 
    (1-\alpha)/2, & (\beta \le \alpha \le 1), \end{cases}  
\]
which gives in all cases $\kappa = \omega/2$.

Indeed, our optimisation problem has a natural interpretation of fitting a horizontal interval $\cI$
of length $\alpha$ at the maximal height under the plot of the function $\eta(\mu)$ which looks like a union of two symmetric 
peaks, see Figure~\ref{fig:two_peaks}. There are there different regimes which correspond to the 
above choice of $\omega$:
\begin{itemize}
  \item $\cI$ fits entirely inside of one peak, see the solid line on  Figure~\ref{fig:two_peaks};
  \item  $\cI$ fits just under the intersection point of the peaks,  see the dashed line on  Figure~\ref{fig:two_peaks};
  \item  $\cI$ can only be fit strictly below  the intersection point of the peaks and stretches from one edge of the plot to another,  see the dotted line on  Figure~\ref{fig:two_peaks};
\end{itemize}
In fact, it is easy to see that   in the first case there is yet another optimal choice of $\omega$, 
which in the second case case we have infinitely many possibilities. Howeve, since the bound~\eqref{eq:Snu-before-optim} contains the term $x^\omega$ we always select the smallest 
admissible value.

\begin{figure}[h]
  \begin{tikzpicture}
    \begin{axis}[
      axis lines=middle,
      xlabel={$\mu~{ }$},
      ylabel={$\eta(\mu)$},
      xlabel style = {right},
      ylabel style = {left},
      xtick={0, 0.5, 1},
      ytick={\empty},
      domain=0:1,
      samples=200,
      ymin=0, ymax=0.2,
      xmin=0, xmax=1,
      legend pos=north east,
      width=9cm,
      height=5cm,
      ]
      
      \def\beta{0.75}
      
      \addplot [
      domain=0:0.5,
      blue,
      thick
      ] {min(x/2, 0.5 - \beta/4 - x/2)};
      
      \addplot [
      domain=0.5:1,
      blue, 
      thick
      ] {min(x/2 - \beta/4, 0.5 - x/2)};
      
      \addplot [
      domain=0.2:0.425,
      red,
      thick
      ] {0.1};
      
      \addplot [
      domain=0.125:0.7,
      red,
      dashdotted, thick
      ] {0.0625};
      
      \addplot [
      domain=0.1:0.9,
      red,
      densely dotted, thick 
      ] {0.05};
      
      \legend{$\eta(\mu)$}
    \end{axis}
  \end{tikzpicture}
  \caption{Three different regimes of $\alpha$ and $\beta$.}
  \label{fig:two_peaks}
\end{figure} 

Noting that~$\omega\le 1/2 \le 1-\beta/4$ in all cases, we get for~$q\le x$ the bound
\[  S_{\nu, a, q}(x, y) \lesssim x q^{-1/4} + x \times \begin{cases} x^{-1/4} q^{1/8} y^{1/4}, & (1\le y \le q^{1/2}), \\ (x/q)^{-1/4}, & (q^{1/2} < y \le q), \\ (x/y)^{-1/4}, & (q<y\le x). \end{cases}  \]
It is easily checked that this coincides with~\eqref{Snu-FM}.

For~$x<q\le x^2$, we assume that~$y < x q^{-1/2}$, for otherwise the claimed bound~\eqref{Snu-FM} is trivial. This translates to~$\alpha < 1-\beta/2$. We optimise the bound~\eqref{eq:Snu-before-optim} by setting
\[  \omega = 1/2 - \beta/4 - \alpha/2,  \]
and we obtain
\[  S_{\nu, a, q}(x, y) \lesssim x q^{-1/4} + x (x/y)^{-1/4} q^{1/8}  \]
in accordance with~\eqref{Snu-FM}.

\subsection{Proof of~\eqref{Snu-FKM}}

First we note that for~$y\le x^{1/3}$, the bound~\eqref{Snu-FM} which we have just proven implies
\[  S_{\nu,a,q}(x, y) \lesssim x (q^{-1/2} + x^{-4/3} q)^{1/8},  \]
which implies~\eqref{Snu-FKM}, reducing the value of~$\delta$ if necessary. We may thus assume~$x^{1/3}<y\le x$.

Assume first that~$x^{1/2}<y\le x$. We use Lemma~\ref{lem:decomp-ylarge} and get
\[  S_{\nu,a,q}(x, y) = O(x/q) + \sum_{\substack{n \le x \\ \gcd(n, q)=1}} \eq(a n^\nu) - \sum_{y<p\le x} \sum_{m\le x/p} \eq(a (mp)^\nu).  \]
Using the Weil bound~\cite{Weil}, coupled with the completing technique~~\cite[Section~12.2]{IwKow}, and periodicity, 
the first sum on the right-hand side is bounded by
\[  \sum_{\substack{n \le x \\ \gcd(n, q)=1}} \eq(a n^\nu)  \lesssim q^{1/2} + x q^{-1/2}.  \]
To bound the second sum, we appeal to the bound~\eqref{eq:trace-mp} of Lemma~\ref{lem:trace-mp}. It follows that for each~$\eps>0$, there exists~$\delta>0$ for which we have
\[  S_{\nu,a,q}(x, y) \lesssim q^{1/2} + xq^{-1/2} + q^{-\delta/4} + q^{(3/4+\eps)\delta} x^{-\delta}.  \]
Reducing~$\delta$ if necessary, the first two terms are absorbed by the last two terms, and we obtain~\eqref{Snu-FKM} for~$y>x^{1/2}$.

The case~$x^{1/3} < y \le x^{1/2}$ similar: upon using Lemma~\ref{lem:decomp-ylarge}, we are to bound an additional sum with~$j=2$, namely
\[  \sum_{y<p_1 \le p_2 \le x} \sum_{m\le x/p_1p_2} \eq(a (mp_1p_2)^\nu),  \]
for which an admissible bound is provided by~\eqref{eq:trace-mp1p2} of Lemma~\ref{lem:trace-mp} .

\section{Comments}
\label{sec:com}

We have already mentioned  that the bound of  Theorem~\ref{thm: Sum S} is nontrivial 
in essentially optimal range  $x \ge q^{1+ \varepsilon}$ 
with an arbitrary fixed $\varepsilon > 0$. 
However, the range where  Theorem~\ref{thm: Sum Snu} 
gives a power saving is unlikely to be the best possible. In fact, for   $\nu\ge 1$ 
one can expect nontrivial bounds starting already from $x \ge q^{1/\nu + \varepsilon}$ 
with an arbitrary fixed $\varepsilon > 0$.  One of the possibilities to extend this range is via the use
of some other bounds on the bilinear sum which appears on the right hand side of~\eqref{eq:BilinSum}, 
exploiting the structure of the argument, 
instead of the generic bound from~\cite[Chapter~VI, Exercise~14.a]{Vinog}. 
For example,   a double application of the H{\"o}lder inequality leads to the following inequality, which 
in several modifications has appeared in a large number of works  and follows the steps in the proof 
of~\cite[Theorem~3]{Kar}. Namely, for any integer $k, \ell \ge 1$, we have
\[
  \abs{ \sum_{\substack{M\le m \le 2M \\ N \le n \le 2N}}  \alpha_{m} \beta_{n} \e_q(a (m n)^\nu) }^{2k\ell} \le q M^{1-1/\ell} N^{1-1/k}
  \(T_k(M) T_\ell(N)\)^{1/k\ell}, 
\]
where $T_k(M)$ is the number of solutions to the congrunce. 
\[ m_1^\nu + \ldots + m_k^\nu \equiv m_{k+1}^\nu + \ldots m_{2k}^\nu \pmod q, 
  \quad M \le m_1, \ldots, m_{2k} \le 2M, 
\]
and similarly for $T_\ell(N)$. Note that with $k = \ell = 1$ this is exactly the bound we
have used to in the proof of Lemma~\ref{lem:sum-bilin}. To estimate  $T_k(M)$ and $T_\ell(N)$ 
for $k, \ell \ge 2$ and $\nu \ge 2$ one can use, for example,~\cite[Theorems~1.1 and~1.2]{KMS}.  
Furthermore, for $\nu \le -1$,  one can also use various  bounds of  Bourgain and Garaev~\cite{BG}, 
Heath-Brown~\cite{HB2} and Pierce~\cite{Pierce}.

Our approach can be adjusted to obtain similar bounds to several variations of the sums
$S_{a,q}(x,y)$ and $S_{\nu, a,q}(x,y)$. For example,   these includes  sums twisted by  multiplicative functions such as
\[
  S_{a,q}(f;x,y)  =   \sum_{n \in \cS(x,y)} f(n) \eq\(an\), 
\]
and more generally
\[
  S_{\nu,a,q}(f;x,y)  =  \sum_{n \in \cS(x,y)} f(n) \eq\(an^\nu\), \quad \nu = \pm 1, \pm 2, \ldots, 
\]
with a multiplicative function $f(n)$. Sums  $S_{a,q}(f;x,y)$ 
have also been studied~\cite[Proposition~1]{dlB1},  see also~\cite[Section~10.2]{dlBGr2}.
If the function $f$ is completely multiplicative, such as a multiplicative character, 
our argument proceeds without 
any changes besides small typographical adjustments and so the bounds 
of Theorems~\ref{thm: Sum S} and~\ref{thm: Sum Snu}  also apply to 
$S_{a,q}(f;x,y)$ and $S_{\nu,a,q}(f;x,y)$ (with an additional factor $\max_{n\le x} |f(n)|$).


\section*{Acknowledgements} 

The authors would like to thank the Royal Institute of Technology and 
Mittag-Leffler Institute for its hospitality and excellent working environment.

During the preparation of this paper, the first author was supported by the joint FWF-ANR project Arithrand: FWF: I 4945-N and ANR-20-CE91-0006 and the second author was also supported by  Australian Research Council Grants DP200100355 and  DP230100530 and by the Knut and Alice Wallenberg Fellowship.

\end{document}